\documentclass{amsart}
\usepackage{amssymb,latexsym,graphics,enumerate,color}
\usepackage{fullpage}
\numberwithin{equation}{section}
\usepackage{bbm}
\usepackage[normalem]{ulem}

\usepackage{xcolor,cancel}

 \usepackage[colorlinks=true]{hyperref}
 \hypersetup{urlcolor=blue, citecolor=blue}

\newtheorem{theorem}{Theorem}[section]
\newtheorem{corollary}[theorem]{Corollary}
\newtheorem{lemma}[theorem]{Lemma}
\newtheorem{proposition}[theorem]{Proposition}

\newcommand{\la}{\langle}
\newcommand{\ra}{\rangle}

\renewcommand{\O}{{\mathcal{O}}}

\newcommand{\R}{{\mathbb{R}}}

\newcommand{\cd}{{\,\cdot\,}}

\newcommand{\ang}{{\not\negmedspace\nabla}}

\newcommand{\tC}{{\tilde{C}}}

\renewcommand{\L}{{\mathcal{L}}}

\renewcommand{\S}{{\mathbb{S}}}
\newcommand{\supp}{{\text{supp }}}

\title[Strauss conjecture with scattering damping]
{The Strauss conjecture for the wave equation with scattering damping}

\author{Elliot Adamson}
\address{Department of Mathematics, University of North Carolina,
  Chapel Hill, NC  27599-3250, USA}
\email{elliotadamsonunc@outlook.com}

\author{Jason Metcalfe}
\address{Department of Mathematics, University of North Carolina,
  Chapel Hill, NC  27599-3250, USA}
\email{metcalfe@email.unc.edu}

\author{Chengbo Wang}
\address{
  School of Mathematical Sciences\\
  Zhejiang Normal University\\
  Jinhua 321004, China
}
\email{wangcbo@zjnu.edu.cn}

\thanks{The first two authors gratefully acknowledge partial support
  from NSF grant 2135998.  The third author was supported by NSFC 12671293.
 }

\begin{document}
\bibliographystyle{plain}

\begin{abstract}
The focus of this article is to establish the existence portion of the
Strauss conjecture for wave equations with spatially dependent scattering
damping.  In the current setting, the damping is not sufficiently
strong to alter the critical Strauss exponent.  In a subsequent study,
the authors will consider the case of damping coefficients with
critical decay where a shift of the Strauss exponent is
expected.  While several recent studies of the Strauss conjecture have
relied upon weighted Strichartz estimates, here, instead, an approach
that is based only on integrated local energy estimates, in particular
those with both $r^p$ and ghost weights, is developed.  This technique
also applies to the problem for the free equation.  Space-time
Klainerman-Sobolev estimates provide the necessary decay to close the argument.
\end{abstract}



\maketitle 

\section{Introduction}

This article focuses on the analog of the Strauss conjecture for wave
equations with spatially dependent scattering damping.  Specifically,
we consider
\begin{equation}\label{mainEquation}
\begin{cases}
  \partial_t^2 w - \Delta w + \frac{a}{(1+|x|)^{1+\delta}} \partial_t
  w = F_p(w),\quad (t,x)\in \R_+\times\R^3,\\
  w(0,\cd)=\varepsilon w_{(0)},\quad \partial_t w(0,\cd)= \varepsilon w_{(1)}
\end{cases}
\end{equation}
where $a, \delta>0$.  The data $w_{(0)}, w_{(1)}$ will be taken to be smooth with
compact support contained within $\{|x|\le \frac{1}{2}\}$.  The value
$1/2$ is chosen to reduce the number of unimportant parameters so as
to increase clarity.  Straightforward modifications will allow for any
finite radius. It is
quite likely that the conditions of smoothness and compact support on the data can be relaxed.  Doing so
would come at the expense of additional notation, while the current
case still contains the primary considerations.  Here, $F_p$ satisfies
\begin{equation}
  \label{Fp}
  \sum_{0\le j\le 2} |w|^j |\partial_w^jF_p(w)|\lesssim |w|^p,\quad
  \text{ when } |w|\ll 1.
\end{equation}

In the case that $a=0$, \eqref{mainEquation} has been widely studied
on $\R_+\times\R^n$.  In short, the Strauss conjecture asserts
(in the $a=0$ case) that \eqref{mainEquation} admits a global solution
for all $\varepsilon>0$ sufficiently small if and only if $p>p_c(n)$
where the critical exponent $p_c(n)>1$ solves
\[(n-1)p_c^2-(n+1)p_c-2=0.\]
For $n=3$, where $p_c(3)=1+\sqrt{2}$, this was first established in
\cite{John79}.  The full existence portion of the conjecture was
proven in \cite{GLS97}, \cite{Ta01-2}, while the blow up for
$p<p_c$(n) was resolved in \cite{Sideris}.

We mention a few recent robust proofs that address the existence portion of
the conjecture in $n=3,4$ in the presence of background geometry: \cite{HMSSZ}, \cite{FaWa}, \cite{LMSTW},
\cite{MW}.  These are rooted in a class of weighted Strichartz
estimates, which are obtained by interpolating between an integrated
local energy estimate and a trace lemma on the sphere.  The current
proof will instead only rely on integrated local energy estimates,
which provides an added degree of robustness to, e.g., absorb
appropriate lower order perturbations.

The conjectured behavior of \eqref{mainEquation} has been discussed in
\cite{Ikehata-Todorova-Yordanov} and
\cite{Nishihara-Sobajima-Wakasugi}.  Additionally, see the summaries
in \cite{lai-tu} and \cite{Lai-Liu-Tu-Wang}.  For $\delta>0$, it
remained the expectation that global existence holds if and only if
$p>1+\sqrt{2}$.  When $-1\le \delta < 0$, the behavior of the heat
equation  dominates (though with a possible shift of the Fujita
exponent), which was the focus of 
\cite{Ikehata-Todorova-Yordanov} and 
\cite{Nishihara-Sobajima-Wakasugi}.  Anticipated $\delta=0$ behavior
is more delicate and results in an $a$-dependent shift of the Strauss
exponent (for small enough $a$).  The authors will explore this latter
case in a sequel to this paper.

In the current study, we focus only on the $\delta>0$ case.  Upper
bounds on the lifespan when $p\le 1+\sqrt{2}$ were established in
\cite{Lai-Liu-Tu-Wang}.  And provided $a\ll 1$, the global existence
result of \cite{MW} would apply.  To the authors' knowledge, however,
no small data global existence result has been proven when
$p>1+\sqrt{2}$ and for any $a, \delta>0$.  The main result of this paper
establishes such and states:
\begin{theorem}\label{main_thm}
  For $p>1+\sqrt{2}$, $a, \delta>0$, $w_{(0)}, w_{(1)}\in C^\infty_c(\R^3)$, and $\varepsilon>0$
  sufficiently small, there exists a global solution $w$ to \eqref{mainEquation}.
\end{theorem}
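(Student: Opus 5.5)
The plan is to run a continuity (bootstrap) argument in a norm built only from integrated local energy estimates, with decay supplied by space-time Klainerman--Sobolev inequalities. Three ingredients come first.

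\textbf{Step 1: linear estimates with damping.} For the linear damped operator $P=\partial_t^2-\Delta+a(1+|x|)^{-1-\delta}\partial_t$, we need a uniform integrated local energy estimate
\[
\sup_t\|\partial v(t)\|_{L^2}+\|\la x\ra^{-1/2-}\partial v\|_{L^2_{t,x}}+\|\la x\ra^{-3/2}v\|_{L^2_{t,x}}\lesssim \|\partial v(0)\|_{L^2}+\|Pv\|_{L^1_tL^2_x+\la x\ra^{1/2+}L^2_{t,x}} .
\]
The damping term has a favorable sign in the energy identity: multiplying by $\partial_t v$ gives the dissipation $\int a\la x\ra^{-1-\delta}|\partial_t v|^2\ge 0$. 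In the Morawetz multiplier $f(r)\partial_r v$, the cross term $a\la x\ra^{-1-\delta}\partial_t v\,\partial_r v$ is bounded by $\epsilon\|\la x\ra^{-1/2-\delta/2}\partial_r v\|^2$ plus the dissipated quantity. Since $\delta>0$, this closes for any $a>0$, with no smallness of $a$. Low frequencies and zero-resonance issues do not arise because of the dissipative sign.

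We then upgrade this to an $r^p$-weighted version (Dafermos--Rodnianski, $0<p<2$) on the exterior region, combined with a ghost weight $e^{\sigma(r-t)}$ (Alinhac) giving control of $\la r-t\ra^{-1/2-}$ good derivatives $\bar\partial(rv)$.

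\textbf{Step 2: commuting vector fields.} We commute with $\partial$, the rotations $\Omega$, and the scaling $S=t\partial_t+r\partial_r$. Rotations commute with $P$ exactly. The commutators with $\partial_t$ also vanish, while $[\partial_x,P]$ and $[S,P]$ produce terms like $\la x\ra^{-2-\delta}\partial_t v$ and $a\,t\la x\ra^{-1-\delta}\partial_t^2 v$. The $t$-weight in $[S,P]$ is the dangerous part. We handle it by using the equation and working with $\partial_t$ near the origin, noting $t\lesssim \la t-r\ra$ when $r\le t/2$. This yields $\la x\ra^{-\delta}$-gains that are summable via the local energy norm, with a hierarchy in the number of $S$'s.

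\textbf{Step 3: decay via Klainerman--Sobolev.} Space-time Klainerman--Sobolev inequalities (dyadic in $r$ and in $t-r$, using $Z=\{\partial,\Omega,S\}$) convert the weighted $L^2$ bounds into pointwise bounds of the form $|v|\lesssim \la r\ra^{-1}\la t-r\ra^{-\gamma}\sum_{|\alpha|\le 3}\|Z^\alpha v\|_{LE}$ with some $\gamma>0$.

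\textbf{Step 4: bootstrap.} We define $M(T)=\sum_{|\alpha|\le 3}$ of the $r^p$/ghost-weighted local energy norms of $Z^\alpha w$ on $[0,T]$, and aim for $M\le C\varepsilon+CM^p$.

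The nonlinearity is estimated using \eqref{Fp}: $|Z^\alpha F_p(w)|\lesssim |w|^{p-1}|Z^{\le 3}w|$ plus terms of lower homogeneity. One factor goes in $L^2$ with weight and the $p-1$ others go in $L^\infty$ via Step 3. The requirement that
\[
\int \la r\ra^{\,1-(p-1)+\text{weight}}\,\la t-r\ra^{-(p-1)\gamma}\cdots
\]
be integrable, after optimizing the $r^p$ exponent and the $\la t-r\ra$ weight, reduces exactly to $p(p-1)>2$ plus margin, that is, $p>1+\sqrt2$. Standard local existence and continuity then give the global solution.

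\textbf{Main obstacle.} I expect the crux to be Step 4's weight bookkeeping near the light cone: obtaining enough $\la t-r\ra$ decay from local energy alone, without Strichartz estimates, to reach the sharp exponent. I would try first to use the ghost weight in the dual norm, placing $F$ in $\la r\ra^{-(1+p')/2}\la t-r\ra^{1/2+}L^2$. The commutator with $S$ in the damping term is the secondary difficulty.
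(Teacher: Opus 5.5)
You have the right overall architecture: dissipative energy and local energy bounds, $r^p$ and ghost weights, space-time Klainerman--Sobolev, and a small-data iteration. As written, however, two steps fail.

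\textbf{The regularity count.} You commute with up to three vector fields and use a Klainerman--Sobolev bound losing three, so you need $Z^{3}F_p(w)$, which contains $F_p'''(w)(Zw)^3$. Hypothesis \eqref{Fp} controls only $F_p$, $F_p'$ and $F_p''$. In the relevant range $p<3$, the model $|w|^p$ has $F_p'''(w)\sim |w|^{p-3}$, which is unbounded at $w=0$. For $p\le 5/2$, which includes $p$ near $1+\sqrt2$, the term $|w|^{p-3}|Zw|^3$ is not even locally square integrable near a simple zero of $w$. So $|Z^\alpha F_p(w)|\lesssim |w|^{p-1}|Z^{\le 3}w|+\dots$ is false for $|\alpha|=3$, and $M\le C\varepsilon+CM^p$ cannot be closed. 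This is why the paper proves \eqref{newCRT} and \eqref{newCUT}. In the coordinates $t-r=e^s$, $r=e^{s+\rho}$ one has $\partial_\rho=r(\partial_t+\partial_r)$. Hence the extra derivative in the Sobolev embedding is the good derivative $r^{-1}(\partial_t+\partial_r)(rZ^{\le 2}w)$, plus $\ang Z^{\le 2}w$ near $r=0$, and the $r^p$ estimate controls both. Together with the $L^4$ bounds \eqref{LqCRT} and \eqref{LqCUT} for the $|w|^{p-2}(Z^{\le 1}w)^2$ terms, only $Z^{\le 2}$ is ever needed.

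\textbf{The sharp-exponent bookkeeping.} You flag this step as the crux but do not carry it out, and your stated reduction is wrong. The condition $p(p-1)>2$ is equivalent to $p>2$, whereas $p>1+\sqrt2$ means $p^2-2p-1>0$. In the paper the sharp exponent comes from a specific device. Take $s=\frac32-\frac{2}{p-1}$ and $\sigma=\frac{p-2}{2p}$, so that $\frac{2}{1-2\sigma}=p$. H\"older interpolation \eqref{goodUsum2} between the $r^p$ bulk term and the ghost-weight term then controls $\la r\ra^{\frac12-s+\sigma}\la t-r\ra^{-\sigma}r^{-1}(\partial_t+\partial_r)(rw)$ in $\ell^p_U\ell^2_\tau L^2_{t,x}$. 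The Hardy inequality \eqref{hardyU} transfers this to $\la r\ra^{-\frac12-s+\sigma}\la t-r\ra^{-\sigma}w$ provided $\sigma<s$, which is precisely $p>1+\sqrt2$. The $p$ factors, each in $\ell^p_U$, then multiply into the $\ell^1_U$ forcing norm of \eqref{rp}. The more natural $\ell^\infty_U$ Hardy bound needs $s>1/2$ and cannot reach $p_c$.

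\textbf{Damping in the $r^p$ estimate.} You never say how the damping enters the $r^p$ estimate, and for small $\delta$ it is not perturbative there. The cross term $a\la r\ra^{1-2s-\delta}|\partial_t w|\,|r^{-1}(\partial_t+\partial_r)(rw)|$ would require $\la r\ra^{1-2s-2\delta}|\partial_t w|^2$ to be integrable. Neither the dissipation nor local energy provides this when $\delta\le 1-s$. The paper's remedy is the conjugation $v=e^{-\frac{a}{2\delta}\la r\ra^{-\delta}\beta(r)}w$. This turns $a\la r\ra^{-1-\delta}\partial_t$ into $a\la r\ra^{-1-\delta}\beta(r)(\partial_t+\partial_r+\frac1r)$, which is a positive term in the $r^p$ identity. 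The remaining errors have extra $\la r\ra^{-\delta}$ decay or compact support.

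Finally, the $t$-weighted commutator you anticipate in Step 2 does not exist: $[P,S]=2P+a\delta\la r\ra^{-1-\delta}\partial_t-a(1+\delta)\la r\ra^{-2-\delta}\partial_t$.
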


We note that there are significant additional studies for wave
equations with temporally
dependent damping of the form
\[\frac{a}{(1+t)^{1+\delta}}\partial_t,\quad \delta>0.\]
See \cite{Lai-Takamura} for upper bounds on the lifespan when
$p<p_c(n)$ and \cite{Wakasa-Yordanov} for such upper bounds when
$p=p_c(n)$.  The analog of Theorem \ref{main_thm} was proved
in \cite{Liu-Wang}.  In their case, however, the time-integrability of
the damping coefficient provides a significant advantage that is not
present in our study.

Unlike the recent studies \cite{HMSSZ}, \cite{FaWa}, \cite{LMSTW},
\cite{MW}, our method is not
based on a weighted Strichartz estimate.  We instead only rely upon
integrated local energy estimates.
Due to the globally integrable (in $x$) damping coefficient, the equation can
be conjugated to turn the lower-order damping term $\partial_t$ into
the ``good'' derivative $\partial_t+\partial_r$.  To control this
perturbation, $r^p$-weighted \cite{DaRo_rp} and ghost weighted \cite{Alinhac_ghost} integrated local
energy estimates are used.  The combination of such is as in
\cite{M-Rhoads}.  To finish and close the argument, we use variants of
the space-time Klainerman-Sobolev estimates of \cite{MTT}. 

\vskip .25in
\section{Notation}
We use the common notation $r=|x|$, $\omega_j = \frac{x_j}{r}$, and $\partial_r =
\frac{x}{r}\cdot\nabla$.  This allows us to orthogonally decompose
\[\nabla = \frac{x}{r}\partial_r + \ang.\]
Since we will not have regularity issues at the origin, we will use
$\la r\ra=1+r$ and $\la t-r\ra = 1+t-r$.

We will work with a restricted set of the invariant vector fields.  We
allow
\[S=t\partial_t+r\partial_r,\qquad \Omega = x\times \nabla\]
to denote the generators of scaling and spatial rotations
respectively.  Then, including the generators of (spatial) translations, we set
\[Z=(\nabla, \Omega, S),\]
which is our set of admissible vector fields.

Importantly, these
satisfy
\begin{equation}\label{vfCommutators}[\Box,
  \nabla]=[\Box,\Omega]=0,\quad [\Box, S] = 2\Box.
\end{equation}
Due to the damping, the sign convention will be important, and we are
taking $\Box=\partial_t^2-\Delta$.

We use the shorthand
\[|A Z^{\le N} w|=\sum_{|\mu|\le N} |A Z^\mu w|,\qquad |A \nabla^{\le N}
  w| = \sum_{|\mu|\le N} |A \nabla_x^\mu w|,\qquad |A \partial^{\le N} w|
  = \sum_{|\mu|\le N} |A \partial^\mu w|
\]
for any vector field $A$.
As is implied in this notation, we let $\partial = (\partial_t,
\nabla)$ denote the space-time gradient.
  
The principal decay estimates are variants of those from \cite{MTT}
and are space-time estimates of Klainerman-Sobolev type.  These
bounds take different forms depending on the proximity to the light
cone.  As the initial data are supported in $\{r\le 1/2\}$, by the
finite speed of propagation, the solutions will be supported in
\[C = \{(t,x)\in \R_+\times \R^3\,:\, r\le t+1\}.\]
We first dyadically decompose in time:
\[C_\tau = C\cap \{\la t\ra\in [\tau, 2\tau]\}.\]
In the sequel, the value $\tau$, along with  $R$ and $U$, should be
understood to run over dyadic values.  We further divide into
\[C^{R=1}_\tau = C_\tau \cap \{r\le 2\}, \quad C_\tau^R = C_\tau
  \cap \{r\in [R, 2R]\} \text{ when } R>1,\]
\[C^{U=1}_\tau = C_\tau \cap \{|t-r|\le 2\},\quad C_\tau^U = C_\tau
  \cap \{t-r\in [U, 2U]\} \text{ when } U>1.\]
This allows us to write
\[C_\tau = \bigcup_{R\le \tau/4} C_\tau^R \cup \bigcup_{U\le \tau/4}
  C^U_\tau \cup C^{\tau/2}_\tau,\]
where
\[C^{\tau/2}_\tau = C_\tau \cap \{r\ge \tau/2\}\cap \{t-r\ge \tau/2\}.\]
See, e.g., \cite{M-Stewart} for a graphic representation of this
breakdown.  We use $\tC^R_\tau$ and $\tC^U_\tau$ to represent slight
enlargements of $C^R_\tau$ and $C^U_\tau$ in both parameters.  These
enlargements capture the tails of the cutoffs that are used in the
Sobolev embeddings that follow.

The key properties are that
\[\la r\ra \approx R,\quad t-r\approx t\approx \tau \quad \text{ on } \tC^R_\tau
  \text{ with } 1\le R\le \tau/4\]
and
\[\la t-r\ra \approx U,\quad r\approx t\approx \tau \quad\text{ on } \tC^U_\tau
  \text{ with } 1\le U\le \tau/4.\]
The region $C^{\tau/2}_\tau$ can be informally thought of as either
type as
\[r\approx t\approx t-r\approx \tau\quad\text{ on } \tC^{\tau/2}_\tau.\]
In an abuse of notation, we will typically include it as
$C^{R=\tau/2}_\tau$ in what follows.

To aid with tracking summability in $\tau, R, U$, we introduce the
notation
\begin{align*} \|w\|^2_{\ell^2_\tau\ell^2_{U\le \tau/4} L^2_tL^2_x(C^U_\tau)} &=
  \sum_\tau \sum_{1\le U\le \tau/4}
  \|w\|^2_{L^2_tL^2_x(C^U_\tau)},\\
  \|w\|^2_{\ell^\infty_U \ell^2_{\tau\ge 4U} L^2_tL^2_x(C^U_\tau)} &= \sup_{U\ge
      1} \sum_{\tau\ge 4U} \|w\|^2_{L^2_tL^2_x(C^U_\tau)}
\end{align*}
with variants for $\ell^p$, $L^p$, $C^R_\tau$, etc. defined analogously.

\vskip .25in

\section{Energy and integrated local energy bounds}

In this section we gather the energy estimates on solutions to
the damped wave equation.  In each case, we let $w\in C^2(\R_+\times \R^3)$
solve
\begin{equation}\label{boxw}
  \begin{cases}
    \Box w + \frac{a}{(1+|x|)^{1+\delta}} \partial_t w = F,\quad
    (t,x)\in \R_+\times \R^3,\\
    w(0,\cd)=w_{(0)},\quad \partial_tw(0,\cd)=w_{(1)}
  \end{cases}
\end{equation}
where
\begin{equation}
  \label{supp}
  \supp\, w_{(0)}, w_{(1)}\subseteq \{r\le \frac{1}{2}\},\quad \supp\, F\subseteq
  \{r\le t+\frac{1}{2}\}.
\end{equation}
It is quite likely that the support condition can be relaxed
throughout this paper.

We begin with a standard energy inequality.  Due
to the damping, as a consequence of the usual argument, we pick up a local
energy bound on $\partial_t w$, as is well-known.
\begin{lemma}
For $a>0$, $\delta>0$, and $w\in C^2(\R_+\times \R^3)$ solving \eqref{boxw} subject to
\eqref{supp}, we have
\begin{equation}
  \label{energy}
  \|\partial w\|^2_{L^\infty_t L^2_x} + \|\la r\ra^{-\frac{1+\delta}{2}}
  \partial_t w\|^2_{L^2_{t,x}} \lesssim \|\partial w(0,\cd)\|^2_{L^2}
  + \int_0^\infty \int |F| |\partial_t w|\,dx\,dt.
\end{equation}
\end{lemma}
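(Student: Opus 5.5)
The plan is the standard multiplier argument: multiply \eqref{boxw} by $\partial_t w$ and integrate over the slab $[0,T]\times\R^3$. The pointwise identity is
\[
\partial_t w\,\Box w = \frac12\partial_t\big(|\partial_t w|^2+|\nabla w|^2\big) - \nabla\cdot\big(\partial_t w\,\nabla w\big).
\]
It gives
\[
\frac12\partial_t\big(|\partial_t w|^2+|\nabla w|^2\big) - \nabla\cdot(\partial_t w\nabla w) + \frac{a}{(1+|x|)^{1+\delta}}|\partial_t w|^2 = F\,\partial_t w .
\]

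We integrate this over $[0,T]\times\R^3$. By finite speed of propagation and \eqref{supp}, $w(t,\cdot)$ is supported in $\{r\le t+\frac12\}$, so for each fixed $t$ the spatial divergence term integrates to zero. The result is
\[
\frac12\|\partial w(T,\cdot)\|_{L^2}^2 + a\int_0^T\!\!\int \la r\ra^{-1-\delta}|\partial_t w|^2\,dx\,dt = \frac12\|\partial w(0,\cdot)\|_{L^2}^2 + \int_0^T\!\!\int F\,\partial_t w\,dx\,dt .
\]
Both terms on the left are nonnegative, because $a>0$ and $(1+|x|)^{1+\delta}=\la r\ra^{1+\delta}$.

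We bound the right side by $\frac12\|\partial w(0,\cdot)\|^2_{L^2}+\int_0^\infty\!\int|F||\partial_t w|$. Each left-hand term is then separately bounded by that quantity. Taking the supremum over $T$ in the first term and letting $T\to\infty$ in the second (monotone convergence) yields \eqref{energy}, with implicit constant depending on $a$ through $\max(1,a^{-1})$.

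There is no real obstacle. The only point requiring care is justifying the integration by parts: $w\in C^2$ has compact support in $x$ for each $t$, so no boundary terms at spatial infinity arise. For this one must know that finite speed of propagation holds for the damped equation; it does, because the damping is lower order with a bounded coefficient, and the standard local energy argument on backward cones shows it.
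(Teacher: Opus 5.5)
Your proposal is correct and is essentially the paper's argument: the paper records the identity $\frac{1}{2}\|\partial w(T,\cd)\|^2_{L^2} = \frac{1}{2}\|\partial w(0,\cd)\|^2_{L^2} + \int_0^T\int \Box w\, \partial_t w\,dx\,dt$, substitutes $\Box w = F - a\la r\ra^{-1-\delta}\partial_t w$, and takes a supremum in $T$. You additionally justify the vanishing of spatial boundary terms via finite speed of propagation, which the paper leaves implicit; this does follow from the backward-cone energy argument, where the damping term even has a favorable sign.
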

An elementary argument gives
\[\frac{1}{2} \|\partial w(T,\cd)\|^2_{L^2} = \frac{1}{2}\|\partial
  w(0,\cd)\|^2_{L^2} + \int_0^T\int \Box w \partial_t w\,dx\,dt.\]
Replacing $\Box w = F - a\la r\ra^{-1-\delta}\partial_t w$ and taking
a supremum in $T$ completes the proof.

We subsequently consider a higher order variant of this energy
estimate where vector fields are applied to the solution $w$.
\begin{corollary}
  For $a>0$, $\delta>0$, and $w\in C^3(\R_+\times \R^3)$ solving
  \eqref{boxw} subject to \eqref{supp}, we have
      \begin{equation}
      \label{energyZ1}
      \|\partial Z^{\le 1} w\|^2_{L^\infty_tL^2_x} + \|\la
r\ra^{-\frac{1+\delta}{2}} \partial_t Z^{\le 1} w\|^2_{L^2_{t,x}}
\lesssim \|\partial Z^{\le 1} w(0,\cd)\|^2_{L^2}\\+ \int_0^\infty\int
|Z^{\le 1} F| |\partial_t Z^{\le 1}w|\,dx\,dt.
    \end{equation}
\end{corollary}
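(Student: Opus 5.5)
The plan is to apply the energy inequality \eqref{energy} to $Zw$ for each $Z\in\{\nabla,\Omega,S\}$, and to the undifferentiated $w$ for the $Z^0$ case. Since $w\in C^3$, each $Zw$ is $C^2$. The supports of $Zw(0,\cd)$ and $\partial_t Zw(0,\cd)$ remain in $\{r\le \frac12\}$, since $Z$ only involves derivatives and multiplication by $t$, $x$. Likewise the forcing for $Zw$ will be supported in $\{r\le t+\frac12\}$. So \eqref{supp} holds for each commuted problem, and the only work is to compute the equation satisfied by $Zw$ and control the commutator errors.

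\textbf{Step 1: commutator computation.} Write $c(r)=a\la r\ra^{-1-\delta}$.
\begin{itemize}
\item For $Z=\Omega$, $c$ is radial and $[\Box,\Omega]=0$, so $\Omega w$ solves \eqref{boxw} with forcing $\Omega F$ and no error.
\item For $Z=\partial_j$, we get $\Box \partial_j w + c\,\partial_t\partial_j w = \partial_j F - (\partial_j c)\partial_t w$, where $|\partial_j c|\lesssim \la r\ra^{-2-\delta}$.
\item For $Z=S$, use \eqref{vfCommutators}, $S\partial_t = \partial_t S - \partial_t$, and $\Box w = F - c\,\partial_t w$. These give
\[\Box Sw + c\,\partial_t Sw = SF + 2F + \bigl(c - 2c - r c'(r)\bigr)\partial_t w .\]
Here $|rc'(r)|\lesssim \la r\ra^{-1-\delta}$.
\end{itemize}
In every case $Zw$ solves \eqref{boxw} with forcing $\tilde F = O(|Z^{\le1}F|) + G$, where $|G|\lesssim \la r\ra^{-1-\delta}|\partial_t w|$.

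\textbf{Step 2: apply \eqref{energy} and absorb.} Applying \eqref{energy} to $Zw$ leaves two kinds of terms. The first is $\int\!\!\int |Z^{\le1}F||\partial_t Zw|$, which is of the desired form. The second is the error
\[\int\!\!\int \la r\ra^{-1-\delta}|\partial_t w||\partial_t Zw|\,dx\,dt \le \eta \|\la r\ra^{-\frac{1+\delta}2}\partial_t Zw\|_{L^2_{t,x}}^2 + C_\eta \|\la r\ra^{-\frac{1+\delta}2}\partial_t w\|_{L^2_{t,x}}^2 .\]
Choosing $\eta$ small relative to the implied constant in \eqref{energy} (which depends only on $a$), the first term is absorbed into the left side. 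This absorption is legitimate provided the left side is finite. If needed, that is arranged by working on $[0,T]$, where finiteness follows from finite speed of propagation and compact support, and then letting $T\to\infty$. The second term is bounded by \eqref{energy} applied to $w$ itself, giving $\|\partial w(0,\cd)\|^2_{L^2}+\int\!\!\int|F||\partial_t w|$. Both pieces are dominated by the right side of \eqref{energyZ1}. Summing over $Z$ and adding the $Z^0$ estimate concludes.

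\textbf{Main obstacle.} The only delicate point is the $S$ commutator, where the lower-order damping term produces $\partial_t w$ errors with the same $\la r\ra^{-1-\delta}$ weight as the damping itself. These errors are not small, so they cannot be treated perturbatively. They must instead be handled by the Cauchy--Schwarz absorption above, combined with the zeroth-order local energy bound on $\partial_t w$ already provided by \eqref{energy}.
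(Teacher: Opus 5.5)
Your proposal is correct and follows the paper's proof. You compute the same commutators: your $(-c-rc')\partial_t w$ error for $S$ is exactly the paper's $\frac{\delta a}{\la r\ra^{1+\delta}}\partial_t w-\frac{a(1+\delta)}{\la r\ra^{2+\delta}}\partial_t w$. You apply \eqref{energy} to $Zw$, absorb the $\partial_t Zw$ factor of the commutator error via Young's inequality, and control the remaining $\|\la r\ra^{-\frac{1+\delta}{2}}\partial_t w\|_{L^2_{t,x}}$ by \eqref{energy} applied to $w$ itself; your truncation to $[0,T]$ to justify the absorption is a detail the paper leaves implicit.
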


\begin{proof}
Letting $L=\Box + \frac{a}{\la r\ra^{1+\delta}} \partial_t$, we
  observe that
  \begin{equation}\label{Lcommutator}[L, \nabla_{x_j}]
    =\omega_j\frac{a(1+\delta)}{\la r\ra^{2+\delta}}\partial_t ,\qquad [L,\Omega]=0,\qquad [L, S]
    = 2L +\frac{\delta a}{\la r\ra^{1+\delta}}\partial_t
    -\frac{a(1+\delta)}{\la r\ra^{2+\delta}}\partial_t.
  \end{equation}
  Applying \eqref{energy} to $Z w$ gives
 \begin{multline*}
\|\partial Z w\|^2_{L^\infty_tL^2_x} + \|\la
r\ra^{-\frac{1+\delta}{2}} \partial_t Z w\|^2_{L^2_{t,x}}
\lesssim \|\partial Z w(0,\cd)\|^2_{L^2}+ \int_0^\infty\int
|Z^{\le 1} F| |\partial_t Zw|\,dx\,dt
\\+ \|\la r\ra^{-\frac{1+\delta}{2}}\partial_t Zw\|_{L^2_{t,x}}\|\la r\ra^{-\frac{1+\delta}{2}}\partial_t w\|_{L^2_{t,x}}.
   \end{multline*}
The first factor of the last term can be absorbed into the left side
by Young's inequality,
   and using \eqref{energy} to bound the second factor of the last term gives \eqref{energyZ1}
\end{proof}

The next estimate to explore is the now commonly applied integrated local
energy estimate.  This type of estimate originates from \cite{Mo2}
with the proof that we outline from \cite{Sterbenz}, \cite{MetSo06}.
\begin{lemma}
  For $a>0$, $\delta>0$, and $w\in C^2(\R_+\times \R^3)$ solving \eqref{boxw} subject to
  \eqref{supp}, we have
  \begin{equation}
    \label{le}
   \|\la r\ra^{-\frac{1}{2}}\partial w\|_{\ell^\infty_R \ell^2_\tau
     L^2_{t,x}(C^R_\tau)}
+ \|\la r\ra^{-\frac{3}{2}} w\|_{\ell^\infty_R \ell^2_\tau L^2_{t,x}(C^R_\tau)}
   \lesssim \|\partial w(0,\cd)\|_{L^2}
   + \|\la r\ra^{\frac{1}{2}} F\|_{\ell^1_R \ell^2_\tau L^2_{t,x}(C^R_\tau)}.
  \end{equation}
\end{lemma}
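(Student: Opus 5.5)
We will prove \eqref{le} with the Morawetz multiplier method, following \cite{Sterbenz}, \cite{MetSo06}. The damping term produces one extra contribution, and we will control it using the damped energy inequality \eqref{energy}.

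\textbf{Multiplier and identity.} Fix a dyadic $R\ge 1$. We multiply \eqref{boxw} by
\[Xw = f(r)\partial_r w + \frac{f(r)}{r} w,\qquad f(r) = \frac{r}{r+R},\]
and integrate over $[0,T]\times\R^3$. The standard computation for $\Box$ gives a positive bulk term:
\[\int_0^T\!\!\int \Big(\tfrac12 f'(r)(\partial_t w)^2 + \tfrac12 f'(r)(\partial_r w)^2 + \Big(\tfrac{f}{r}-\tfrac{f'}{2}\Big)|\ang w|^2 - \tfrac12\Delta\Big(\tfrac{f}{r}\Big) w^2\Big)\,dx\,dt.\]
For this choice of $f$, the bulk term dominates
\[R^{-1}\int\!\!\int_{\{r\approx R\}} \big(|\partial w|^2 + r^{-2}w^2\big).\]
The $\partial_t w$ term appears with the right sign because of the $\frac{f}{r}w$ correction, exactly as in \cite{Sterbenz}. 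The boundary terms at $t=0$ and $t=T$ are bounded by $\|\partial w(t)\|_{L^2}^2$ via Hardy's inequality, since $|f|\le 1$ and $|f/r|\lesssim r^{-1}$. The forcing contributes $\int\!\!\int |F||Xw|$.

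\textbf{Damping contribution.} The only new term is
\[\int\!\!\int \frac{a}{\la r\ra^{1+\delta}}\partial_t w\Big(f\partial_r w + \frac{f}{r}w\Big).\]
We estimate it by Cauchy--Schwarz:
\[\lesssim \|\la r\ra^{-\frac{1+\delta}{2}}\partial_t w\|_{L^2_{t,x}}\,\Big\|\la r\ra^{-\frac{1+\delta}{2}}\Big(|\partial w| + \frac{|w|}{r}\Big)\Big\|_{L^2_{t,x}}.\]
Since $\delta>0$, the weight $\la r\ra^{-1-\delta}$ is summable over dyadic shells. The second factor is therefore controlled by $\sup_R$ of the left side of \eqref{le}, and the first factor is controlled by \eqref{energy}. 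This is the step I expect to be the main obstacle, because it is where the argument closes.

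To close it, we first take the supremum over $R$. We then apply \eqref{energy}, whose right side involves $\int\!\!\int|F||\partial_t w|$. That quantity is bounded by
\[\|\la r\ra^{\frac12}F\|_{\ell^1_R\ell^2_\tau L^2}\;\cdot\;\|\la r\ra^{-\frac12}\partial w\|_{\ell^\infty_R\ell^2_\tau L^2}.\]
Young's inequality then absorbs the local-energy norm into the left side. Near the origin, for $R=1$, the Hardy-type control of $w/r$ follows from the $\tfrac{f}{r}-\tfrac{f'}{2}$ and $-\Delta(f/r)$ terms. Alternatively, we can use Hardy's inequality on $\{r\le 2\}$ together with the energy bound.

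\textbf{Forcing and summation.} We bound the forcing term by
\[\int\!\!\int|F|\big(|\partial w|+r^{-1}|w|\big) \le \sum_{R'}\|\la r\ra^{\frac12}F\|_{L^2(\{r\approx R'\})}\,\|\la r\ra^{-\frac12}(|\partial w|+r^{-1}|w|)\|_{L^2(\{r\approx R'\})},\]
using $|f|\le1$. This is at most $\|\la r\ra^{\frac12}F\|_{\ell^1_R L^2}$ times the $\ell^\infty_R$ norm, and it is absorbed the same way. The dyadic time decomposition $\ell^2_\tau$ inside each shell is just a reorganization of $L^2_{t,x}$, so it costs nothing. Taking the supremum over $R$ and $T$ then gives \eqref{le}.
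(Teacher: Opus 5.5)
Your proposal is correct and follows essentially the paper's route. Both use the Sterbenz/Metcalfe--Sogge multiplier $\frac{r}{r+R}\partial_r+\frac{1}{r+R}$ for fixed $R$, control the damping term by the weighted $\partial_t w$ bound in \eqref{energy} (summable over dyadic shells because $\delta>0$), and absorb the forcing via the $\ell^1_R$--$\ell^\infty_R$ pairing and Young's inequality. The differences are cosmetic: the paper adds $C\partial_t$ to the multiplier and treats $\Box w$ as the forcing before substituting $\Box w = F - a\la r\ra^{-1-\delta}\partial_t w$, whereas you move the damping cross term to the right directly and use \eqref{energy} for the $t=T$ boundary term. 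One small inaccuracy: the bulk term $-\frac12\Delta(f/r)w^2$ only gives $w^2/r$ near the origin, not $w^2/r^2$. This is harmless, since $f/r=(r+R)^{-1}\le\la r\ra^{-1}$ means you only ever need $\la r\ra^{-1}|w|$.
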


When $R\approx \tau$, we can further subdivide $C^R_\tau$ dyadically in
$t-|x|$, which allows the naive estimate
\begin{equation}\label{Fterm}\|\la r\ra^{\frac{1}{2}} F\|_{\ell^1_R \ell^2_\tau
    L^2_{t,x}(C^R_\tau)}
  \lesssim \inf_{F=F_1+F_2} \Bigl(\|\la r\ra ^{\frac{3}{2}-s} F_1\|_{\ell^2_\tau \ell^2_{R} L^2_{t,x}(C^R_\tau)} + \|\la r\ra^{1-s} \la
  t-r\ra^{\frac{1}{2}} F_2\|_{\ell^1_{U} \ell^2_{\tau}
    L^2_{t,x}(C^U_\tau)}\Bigr)
\end{equation}
provided that $s\le \frac{1}{2}$.
We will most commonly apply this with
\begin{equation}\label{Fsplit} F_1=\bigcup_{\tau}\bigcup_{1\le R\le \tau/2} \chi_{C^R_\tau} F,\qquad
  F_2 = \bigcup_\tau \bigcup_{1\le U\le \tau/4} \chi_{C^U_\tau} F.\end{equation}
The norms on the right of \eqref{Fterm} will
appear in later bounds, and it will be convenient to have them in this
common form.

\begin{proof}
For $R>0$ fixed, the argument of \cite{Sterbenz}, \cite{MetSo06} yields 
\begin{multline*}
  \int_0^T\int_{\{\la x\ra\approx R\}} \la r\ra^{-1} |\partial
  w|^2\,dx\,dt + \int_0^T\int_{\{\la x\ra\approx R\}} \la r\ra^{-3} |w|^2\,dx\,dt
 \\ \lesssim \int |\partial w(0,x)|^2\,dx + \int_0^T\int |\Box w|
  \Bigl(|\partial w| +\la r\ra^{-1} |w|\Bigr)\,dx\,dt.
\end{multline*}
To see this, one pairs $\Box w$ with a multiplier of the form
$C\partial_t + \frac{r}{r+R}\partial_r + \frac{1}{r+R}$, integrates by
parts, and applies the Hardy inequality
\begin{equation}\label{L2Hardy}
  \|r^{-1}f\|_{L^2(\R^3)}\lesssim \|\nabla f\|_{L^2(\R^3)}.
\end{equation}
Taking a supremum in $T, R$ and
applying the Schwarz inequality yields
\begin{multline*}
  \|\la r\ra^{-\frac{1}{2}} \partial w\|^2_{\ell^\infty_R \ell^2_\tau
    L^2_{t,x}(C^R_\tau)} +
    \|\la r\ra^{-\frac{3}{2}} w\|^2_{\ell^\infty_R \ell^2_\tau
      L^2_{t,x}(C^R_\tau)}
    \lesssim \|\partial w(0,\cd)\|^2_{L^2} \\+ \|\la r\ra^{\frac{1}{2}}
    \Box w\|_{\ell^1_R\ell^2_\tau L^2_{t,x}(C^R_\tau)}
    \Bigl(\|\la r\ra^{-\frac{1}{2}} \partial w\|_{\ell^\infty_R \ell^2_\tau
    L^2_{t,x}(C^R_\tau)} +
    \|\la r\ra^{-\frac{3}{2}} w\|_{\ell^\infty_R \ell^2_\tau
      L^2_{t,x}(C^R_\tau)}\Bigr).
\end{multline*}
Using Young's inequality, we may absorb the second factor of the last
term into the left side, which gives
\begin{multline*}
   \|\la r\ra^{-\frac{1}{2}} \partial w\|_{\ell^\infty_R \ell^2_\tau
    L^2_{t,x}(C^R_\tau)} +
    \|\la r\ra^{-\frac{3}{2}} w\|_{\ell^\infty_R \ell^2_\tau
      L^2_{t,x}(C^R_\tau)}
    \lesssim \|\partial w(0,\cd)\|_{L^2} \\+ \|\la r\ra^{\frac{1}{2}}
    F\|_{\ell^1_R\ell^2_\tau L^2_{t,x}(C^R_\tau)}
    + \|\la r\ra^{-\frac{1}{2}-\delta} \partial_t
    w\|_{\ell^1_R\ell^2_\tau L^2_{t,x}(C^R_\tau)}.
\end{multline*}
Applying \eqref{energy}, the last term is controlled via
\begin{align*}
\|\la r\ra^{-\frac{1}{2}-\delta} \partial_t
    w\|_{\ell^1_R\ell^2_\tau L^2_{t,x}(C^R_\tau)}^2
&\lesssim \|\la r\ra^{-\frac{1+\delta}{2}} \partial_t
w\|^2_{L^2_{t,x}}\\
&\lesssim \|\partial w(0,\cd)\|_{L^2} + \int_0^\infty \int |F|
|\partial w|\,dx\,dt.
\end{align*}
The Schwarz inequality on the last term lets us see that
\[\int_0^\infty \int |F| |\partial w|\,dx\,dt
  \lesssim \|\la r\ra^{\frac{1}{2}}F\|_{\ell^1_R\ell^2_\tau L^2_{t,x}(C^R_\tau)}
  \|\la r\ra^{-\frac{1}{2}} \partial w\|_{\ell^\infty_R
    \ell^2_\tau L^2_{t,x}(C^R_\tau)},\]
and we may again absorb the last factor into the far left side to obtain \eqref{le}.
\end{proof}

A higher order version of the previous estimate, where up to two vector fields are applied to
the solution, will be required and is the next result.
\begin{corollary}
For $a>0$, $\delta>0$, and $w\in C^4(\R_+\times \R^3)$ solving
\eqref{boxw} subject to \eqref{supp}, we have
\begin{multline}
  \label{leZ2}
   \|\la r\ra^{-\frac{1}{2}}\partial Z^{\le 2} w\|_{\ell^\infty_R \ell^2_\tau
     L^2_{t,x}(C^R_\tau)}
+ \|\la r\ra^{-\frac{3}{2}} Z^{\le 2} w\|_{\ell^\infty_R \ell^2_\tau L^2_{t,x}(C^R_\tau)}
   \\\lesssim \|\partial Z^{\le 2} w(0,\cd)\|_{L^2}
   + \|\la r\ra^{\frac{1}{2}} Z^{\le 2} F\|_{\ell^1_R \ell^2_\tau L^2_{t,x}(C^R_\tau)}.
\end{multline}
\end{corollary}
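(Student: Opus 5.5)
We apply \eqref{le} to $Z^\mu w$ for $|\mu|\le 2$, treating the commutator $[L,Z^\mu]w$ as an additional forcing term, exactly as \eqref{energyZ1} was derived from \eqref{energy}. The support condition \eqref{supp} is preserved for $Z^\mu w$ and $Z^\mu F$, since the vector fields do not enlarge supports. By \eqref{Lcommutator}, with $L=\Box+a\la r\ra^{-1-\delta}\partial_t$, we have for $|\mu|\le 2$
\[
L Z^\mu w = Z^\mu F + \sum_{|\nu|\le 1} c_\nu Z^\nu F + \mathcal{R}_\mu,\qquad |\mathcal{R}_\mu|\lesssim \la r\ra^{-1-\delta}\bigl(|\partial Z^{\le |\mu|-1} w| + |Z^{\le 1}\partial_t w|\bigr).
\]
The term $2L$ in $[L,S]$ reproduces $F$ and its first-order $Z$ derivatives. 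The other commutator terms are $\la r\ra^{-1-\delta}\partial_t$ or $\la r\ra^{-2-\delta}\partial_t$ applied to at most one further vector field. When a second vector field hits a coefficient $\la r\ra^{-1-\delta}$, $\omega_j\la r\ra^{-2-\delta}$, or $\partial_t$, it keeps the decay at least $\la r\ra^{-1-\delta}$: we have $S\la r\ra^{-k} = O(\la r\ra^{-k})$, $\Omega$ kills radial functions, $\nabla\omega_j=O(r^{-1})$ is harmless for $r\ge1$ and smooth near the origin after expressing $\omega_j\la r\ra^{-2-\delta}$ via $x_j$, and $[S,\partial_t]=-\partial_t$. So the remainder is always bounded by $\la r\ra^{-1-\delta}|\partial Z^{\le 1} w|$.

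Applying \eqref{le} to $Z^\mu w$ gives the right side of \eqref{leZ2} plus
\[
\|\la r\ra^{\frac12}\la r\ra^{-1-\delta}\partial Z^{\le1}w\|_{\ell^1_R\ell^2_\tau L^2_{t,x}(C^R_\tau)}\lesssim \sum_R R^{-\delta}\,\|\la r\ra^{-\frac12}\partial Z^{\le 1}w\|_{\ell^2_\tau L^2_{t,x}(C^R_\tau)}\lesssim \|\la r\ra^{-\frac12}\partial Z^{\le1}w\|_{\ell^\infty_R\ell^2_\tau L^2_{t,x}(C^R_\tau)}.
\]
Here $\delta>0$ lets us pass from $\ell^1_R$ to $\ell^\infty_R$. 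This lower-order quantity is then bounded by induction on the number of vector fields.

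\textbf{Base case ($|\mu|=0$).} This is \eqref{le} itself.

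\textbf{First-order case ($|\mu|=1$).} The remainder involves only $\la r\ra^{-1-\delta}\partial w$ (for $\nabla$, actually $\partial_t w$). By the display above it is bounded by the left side of \eqref{le}, hence by $\|\partial w(0)\|_{L^2}+\|\la r\ra^{1/2}F\|$.

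\textbf{Second-order case ($|\mu|=2$).} The remainder is bounded by the first-order left side just obtained, which in turn is controlled by the data and $Z^{\le1}F$ norms. The term $\sum_{|\nu|\le1} c_\nu Z^\nu F$ is already of the form appearing on the right of \eqref{leZ2}.

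The main obstacle I expect is bookkeeping in the commutator for two vector fields, specifically verifying that every term carries the integrable weight $\la r\ra^{-1-\delta}$. One might fear a term like $\delta a\la r\ra^{-1-\delta}\partial_t$ from $[L,S]$ loses this weight after a second application of $S$, but $S$ preserves the homogeneity class, so the decay is kept. A secondary point is near $r=0$: there the decomposition into $\omega_j$ looks singular, but $\nabla\la r\ra^{-1-\delta}$ is bounded since $\la r\ra=1+r$ is Lipschitz. Second derivatives of $\la r\ra$ would be singular at $r=0$, but they never appear because each commutator differentiates the coefficient at most once in the $R=1$ region, with the remaining vector field landing on $w$. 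If this becomes delicate, one can replace $\la r\ra$ with a smooth equivalent in the coefficient analysis, or use the crude bound $|\nabla^2 \la r\ra^{-1-\delta}|\lesssim r^{-1}$ combined with \eqref{L2Hardy}.
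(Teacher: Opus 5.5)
Your argument is correct in outline, but it controls the commutator errors differently from the paper. The paper applies \eqref{le} to $Z^{\le 2}w$ and bounds the errors $\|\la r\ra^{-\frac12-\delta}\partial_t Z^{\le 1}w\|_{\ell^1_R\ell^2_\tau L^2_{t,x}(C^R_\tau)}$ by the damping-generated quantity $\|\la r\ra^{-\frac{1+\delta}{2}}\partial_t Z^{\le 1}w\|_{L^2_{t,x}}$ from \eqref{energyZ1}. It then closes with Schwarz and a Young absorption, mirroring the proof of \eqref{le} itself.

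You instead induct on the number of vector fields and use only \eqref{le}. Because the errors are strictly lower order in $Z$ and carry the weight $\la r\ra^{-1-\delta}$, the factor $R^{-\delta}$ lets you pass from $\ell^1_R$ to the $\ell^\infty_R$ local energy norm of $\partial Z^{\le 1}w$. The previous inductive step already controls that norm, so no absorption is needed.

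Your route is more perturbative: it would tolerate commutator errors involving any component of $\partial$. The paper's route relies on those errors involving only $\partial_t$, which holds for \eqref{Lcommutator} and is exactly what \eqref{energyZ1} controls. In exchange, the paper reuses the coercive $\partial_t$ gain from the damping. That mechanism was indispensable for \eqref{le} itself, where the damping term is not lower order and $a$ is not small.

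One correction is needed. Your claim that second derivatives of $\la r\ra$ never appear is false, and $\omega_j\la r\ra^{-2-\delta}=x_j r^{-1}\la r\ra^{-2-\delta}$ is not smooth at the origin. When $Z^\mu=\nabla_k\nabla_j$, the term $\nabla_k([L,\nabla_j]w)$ contains $-a\,\partial_k\partial_j(\la r\ra^{-1-\delta})\,\partial_t w$, whose coefficient has size $r^{-1}$ near $r=0$. So the pointwise bound $|\mathcal{R}_\mu|\lesssim \la r\ra^{-1-\delta}|\partial Z^{\le 1}w|$ fails there. Replacing $\la r\ra$ by a smooth equivalent is not available, since the coefficient is fixed by the equation.

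Your Hardy fallback is therefore required, not optional. Applying \eqref{L2Hardy} to a cutoff of $\partial_t w$ gives $\|r^{-1}\partial_t w\|_{L^2_{t,x}(\{r\le 2\})}\lesssim \|\partial_t\nabla w\|_{L^2_{t,x}(\{r\le 4\})}+\|\partial_t w\|_{L^2_{t,x}(\{r\le 4\})}$. The right side is bounded by your first-order local energy estimate on finitely many $R$. This is precisely the last term of \eqref{leZ2_1}, and the paper treats it the same way.
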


\begin{proof}
Applying \eqref{le} to $Z^{\le 2} w$ and using \eqref{Lcommutator}, we have
  \begin{multline}\label{leZ2_1}\|\la r\ra^{-\frac{1}{2}}\partial Z^{\le 2} w\|_{\ell^\infty_R
      \ell^2_\tau L^2_{t,x}(C^R_\tau)} + \|\la r\ra^{-\frac{3}{2}}
    Z^{\le 2} w\|_{\ell^\infty_R\ell^2_\tau L^2_{t,x}(C^R_\tau)}
    \lesssim \|\partial Z^{\le 2} w(0,\cd)\|_{L^2}
    \\+ \|\la r\ra^{\frac{1}{2}} Z^{\le 2} F\|_{\ell^1_R\ell^2_\tau
      L^2_{t,x}(C^R_\tau)}
    +\|\la r\ra^{-\frac{1}{2}-\delta} \partial_t Z^{\le 1}
    w\|_{\ell^1_R\ell^2_\tau L^2_{t,x}(C^R_\tau)}
+ \|r^{-1} \la r\ra^{-\frac{3}{2}-\delta} \partial_t
w\|_{\ell^1_R\ell^2_\tau L^2_{t,x}(C^R_\tau)}.
    \end{multline}
    For the second to last term, we have
    \[\|\la r\ra^{-\frac{1}{2}-\delta} \partial_t Z^{\le 1}
      w\|_{\ell^1_R\ell^2_\tau L^2_{t,x}(C^R_\tau)}
      \lesssim \|\la r\ra^{-\frac{1+\delta}{2}}\partial_t Z^{\le 1}
      w\|_{L^2_{t,x}},\]
For the last term in \eqref{leZ2_1}, which results in the case that
both vector fields are $\nabla$ and the second one lands on the
$\omega_j$ in \eqref{Lcommutator}, we apply the Hardy estimate
\eqref{L2Hardy}
to get
\begin{align*}
  \|r^{-1} \la r\ra^{-\frac{3}{2}-\delta} \partial_t
  w\|_{\ell^1_R\ell^2_\tau L^2_{t,x}(C^R_\tau)} &\lesssim
                                                    \|\la
                                                  r\ra^{-\frac{3}{2}-\delta}
                                                  \partial_t \nabla
                                                  w\|_{\ell^1_R\ell^2_\tau L^2_{t,x}(C^R_\tau)}
                                                  +  \|\la r\ra^{-\frac{5}{2}-\delta} \partial_t
                                                  w\|_{\ell^1_R\ell^2_\tau L^2_{t,x}(C^R_\tau)}\\
  &\lesssim \|\la r\ra^{-\frac{1+\delta}{2}} \partial_t Z^{\le 1} w\|_{L^2_{t,x}}.
\end{align*}
In both cases we may apply \eqref{energyZ1}.
   Using the Schwarz
    inequality as above to see that
    \[\int_0^\infty \int |Z^{\le 1}F||\partial_tZ^{\le 1}w|\,dx\,dt
      \lesssim \|\la r\ra^{\frac{1}{2}} Z^{\le
        1}F\|_{\ell^1_R\ell^2_\tau L^2_{t,x}(C^R_\tau)}
      \|\la r\ra^{-\frac{1}{2}} \partial_t Z^{\le 1}
      w\|_{\ell^\infty_R\ell^2_\tau L^2_{t,x}(C^R_\tau)}\]
    and using Young's inequality to absorb the second factor back into
    the left side of \eqref{leZ2_1}.
  \end{proof}


The remaining results in this section will rely on a conjugation.  The
conjugation turns the first order term into the ``good'' derivative
$\partial_t+\partial_r$, which meshes nicely with the $r^p$/ghost
weighted estimates in the sequel.  The remaining error terms either
have extra decay at infinity, which yields smallness, or are supported
in a compact set where the weights are irrelevant due to \eqref{le}.

In what follows, we fix $\beta$ to be a smooth, radial function that
vanishes for $\{r\le 1\}$ and is identically $1$ on $\{r\ge 2\}$.
For $w$ solving \eqref{boxw}, we set
\begin{equation}
  \label{v}
  v = \exp\Bigl(-\frac{a}{2\delta} \la r\ra^{-\delta}\beta(r)\Bigr)
  w.
\end{equation}
Then $v$ solves
\begin{multline}
  \label{vequation}
  \Bigl[\Box +
  a\la r\ra^{-1-\delta}\beta(r)\Bigl(\partial_t+\partial_r+\frac{1}{r}\Bigr)\Bigr]v
  = e^{-\frac{a}{2\delta}\la r\ra^{-\delta}\beta(r)}F
+ \frac{a(1+\delta)}{2}
\beta(r) \la r\ra^{-2-\delta}v \\+
\frac{a^2}{4}  \la r\ra^{-2-2\delta} (\beta(r))^2 v
+\frac{a}{\delta} \la r\ra^{-\delta}\beta'(r) \partial_r v
-a\la r\ra^{-1-\delta} (1-\beta(r)) \partial_t v
-V(r) v
\end{multline}
where
\[V(r)=a\la r\ra^{-1-\delta} \beta'(r) -
  \frac{a}{2\delta}\la r\ra^{-\delta}\Delta \beta(r) +
  \frac{a^2}{2\delta}\la r\ra^{-1-2\delta}\beta(r) \beta'(r)
  -\frac{a^2}{4\delta^2}\la r\ra^{-2\delta}(\beta'(r))^2,\]
which importantly is smooth and has support contained within $\{1\le r\le 2\}$.  We include
the cutoff $\beta$ to eliminate singular behaviors that might
otherwise occur when commuting the generators of translations,
$\nabla_{x_j}$, with the linear operator.  We will use the following
abbreviation for the linear operator
\begin{multline}
  \label{L}
  \L = \Box + \frac{a}{\la
    r\ra^{1+\delta}}\beta(r)\Bigl(\partial_t+\partial_r+\frac{1}{r}\Bigr)
  - \frac{a(1+\delta)}{2}
\beta(r) \la r\ra^{-2-\delta} \\-
\frac{a^2}{4}  \la r\ra^{-2-2\delta} (\beta(r))^2 
-\frac{a}{\delta} \la r\ra^{-\delta}\beta'(r)\partial_r 
+a\la r\ra^{-1-\delta} (1-\beta(r)) \partial_t 
+V(r),
\end{multline}
and as such we have $\L v = e^{-\frac{a}{2\delta}\la
  r\ra^{-\delta}\beta(r)} F$.

The next theorem is akin to \cite[Proposition 2.3]{M-Stewart} applied
to $v$.  This represents a combination of the $r^p$-weighted estimates
of \cite{DaRo_rp} and the ghost weighted estimates of
\cite{Alinhac_ghost} as was done in \cite{M-Rhoads}.  We work with
$\frac{p}{2}=1-s$, where in this case the $p$ represents that within
the $r^p$-estimates as in \cite[Proposition 2.3]{M-Stewart}.
\begin{theorem}\label{thm_rp}
  Let $0<s\le 1/2$ and $0\le \sigma< s$.  Then if $w\in C^2(\R_+\times \R^3)$ solves
  \eqref{boxw} subject to \eqref{supp}, we have
  \begin{multline}\label{rp}
   \|\la r\ra^{\frac{1}{2}-s} r^{-1}
w\|_{L^2_{t,x}}+
\|\la r\ra^{\frac{1}{2}-s}
r^{-1}(\partial_t+\partial_r)(r w)\|_{L^2_{t,x}}
+\|\la r\ra^{\frac{1}{2}-s} \ang w\|_{L^2_{t,x}}
  \\+ \|\la r\ra^{\frac{1}{2}-s+\sigma} \la t-r\ra^{-\sigma}
  r^{-1}(\partial_t+\partial_r)(r
  w)\|_{\ell^\frac{2}{1-2\sigma}_U\ell^2_{\tau\ge 4U}
    L^2_{t,x}(C^U_\tau)}
 \\   + \|\la r\ra^{-\frac{1}{2}-s+\sigma} \la t-r\ra^{-\sigma}
  w\|_{\ell^\frac{2}{1-2\sigma}_U\ell^2_{\tau\ge 4U}
    L^2_{t,x}(C^U_\tau)}
  \\
  \lesssim \|\partial
  w(0,\cd)\|_{L^2}
  +\inf_{F=F_1+F_2}\Bigl(
 \|\la r\ra^{\frac{3}{2}-s} F_1\|_{\ell^2_\tau \ell^2_R L^2_{t,x}(C^R_\tau)}
 +\|\la r\ra^{1-s}\la t-r\ra^{\frac{1}{2}} F_2\|_{\ell^1_U
   \ell^2_\tau L^2_{t,x}(C^U_\tau)}\Bigr).
\end{multline}
\end{theorem}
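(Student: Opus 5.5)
The plan is to prove \eqref{rp} for $v$ defined in \eqref{v}, then transfer it to $w$. Since $e^{-\frac{a}{2\delta}\la r\ra^{-\delta}\beta}$ is bounded above and below, and its radial derivative is $O(\la r\ra^{-1-\delta})$, we have $|w|\approx|v|$ and $|\ang w|\approx|\ang v|$. Also
\[r^{-1}(\partial_t+\partial_r)(rw) = e^{\frac{a}{2\delta}\la r\ra^{-\delta}\beta}\Bigl(r^{-1}(\partial_t+\partial_r)(rv) + O(\la r\ra^{-1-\delta})v\Bigr),\]
and the error term is dominated by the $\la r\ra^{\frac12-s}r^{-1}v$ norm already on the left. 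So it suffices to prove \eqref{rp} for $v$ solving $\L v = e^{-\frac{a}{2\delta}\la r\ra^{-\delta}\beta}F$.

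For the main step, we write $\L = \Box + a\la r\ra^{-1-\delta}\beta(\partial_t+\partial_r+\frac1r) + E$, where $E$ collects the remaining terms in \eqref{L}. We run the multiplier argument of \cite[Proposition 2.3]{M-Stewart}. The $r^p$ weight with $p=2-2s$, following \cite{DaRo_rp}, is applied to $\psi=rv$, with multiplier $r^{p}(\partial_t+\partial_r)\psi$ together with a ghost weight $e^{\phi(t-r)}$, where $\phi'(t-r)\approx \la t-r\ra^{-1-2\sigma}$ suitably adapted, as in \cite{Alinhac_ghost}, \cite{M-Rhoads}. The cutoffs are arranged to produce the $\ell^{2/(1-2\sigma)}_U$ gain. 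Near the origin the weights are irrelevant, so that region is handled by \eqref{le}. The $F$ terms are paired with the multiplier and estimated exactly as in \eqref{Fterm}, with the split \eqref{Fsplit}, giving the right side of \eqref{rp}.

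The damping term is the key new point. After multiplication by $r$ it becomes $a\la r\ra^{-1-\delta}\beta(\partial_t+\partial_r)\psi$. Paired with $r^{p-2}\cdot r^{0}(\partial_t+\partial_r)\psi$-type multipliers (with normalization $r^{-2}$ from $\psi=rv$), it produces
\[a\int \la r\ra^{-1-\delta}\beta\, r^{p-2}|(\partial_t+\partial_r)\psi|^2\,dx\,dt \ge 0.\]
This term has a favorable sign because of the convention $\Box=\partial_t^2-\Delta$ and $a>0$, so it can be dropped. Failing positivity in the ghost-weighted variant, it is in any case bounded by $\la r\ra^{-\delta}$ times the good-derivative term on the left. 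That is small for $r\ge R_0$ large, and for $r\le R_0$ it is controlled by \eqref{le} and \eqref{energy}.

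The terms in $E$ are handled as follows:
\begin{itemize}
\item The potentials $\la r\ra^{-2-\delta}v$ and $\la r\ra^{-2-2\delta}v$ produce errors bounded by $\|\la r\ra^{\frac12-s}r^{-1}v\|\,\|\la r\ra^{\frac12-s-\delta}r^{-1}v\|$-type products. The portion with $r\ge R_0$ is absorbed into the left side. The compact portion, together with the terms involving $\beta'$, $V$ and $(1-\beta)\partial_t$, which are all supported in $r\le 2$, is bounded by \eqref{le} applied to $w$, since \eqref{le} controls $\|\partial w\|$ and $\|w\|$ on compact sets by the same right side via \eqref{Fterm} (using $s\le\frac12$).
\item The boundary terms at $t=0$ are bounded by $\|\partial w(0,\cd)\|_{L^2}$ via \eqref{L2Hardy}, using the compact support of the data.
\end{itemize}

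The main obstacle is bookkeeping the damping term and its commutator errors (derivatives of $\la r\ra^{-1-\delta}$, the $\frac1r$ term, the $\beta$ cutoff) against the ghost weight and the $t-r$ weights in the $\ell^{2/(1-2\sigma)}_U$ norms. All of these must come out either with a good sign or with an extra $\la r\ra^{-\delta}$ factor, so that the large-$R$ part can be absorbed and the compact part handed to \eqref{le}.
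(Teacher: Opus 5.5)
There is a genuine gap. Your handling of the conjugation, of the damping term (which does have a good sign), and of the compactly supported errors via \eqref{le} matches the paper. However, nothing in your argument produces the last term on the left of \eqref{rp}, namely $\|\la r\ra^{-\frac12-s+\sigma}\la t-r\ra^{-\sigma}w\|_{\ell^{\frac{2}{1-2\sigma}}_U\ell^2_{\tau\ge 4U}L^2_{t,x}(C^U_\tau)}$.

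A multiplier identity only controls $(\partial_t+\partial_r)(rv)$ and $\ang v$. Undifferentiated terms have to come from Hardy inequalities along $\partial_t+\partial_r$. On $C^U_\tau$, this term is stronger than $\|\la r\ra^{\frac12-s}r^{-1}w\|_{L^2_{t,x}}$ (which \eqref{hardyR} supplies) by a factor $(\tau/U)^{\sigma}$, so it is not free.

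The natural route fails in your range. Write $\la r\ra^{A-1}=A^{-1}(\partial_t+\partial_r)\la r\ra^{A}$, multiply by $\la t-r\ra^{B}\chi_U(t-r)(rv)^2$, and integrate by parts. This only closes when $A<0$, so that the $t=T$ boundary term can be dropped. Feeding in the good-derivative weight $\la r\ra^{2-2s}$ from your ghost-weight estimate forces $A=1-2s\ge 0$ when $s\le\frac12$. This is exactly the obstruction the paper flags for the \cite[Lemma 2.4]{M-Rhoads}-type bound. A symptom is that the hypothesis $\sigma<s$ is never used anywhere in your sketch.

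Your transfer from $v$ back to $w$ for the $\ell^{\frac{2}{1-2\sigma}}_U$ good-derivative term also silently needs this missing term. The error $\la r\ra^{-\frac12-s+\sigma-\delta}\la t-r\ra^{-\sigma}v$ is not controlled by $\|\la r\ra^{\frac12-s}r^{-1}v\|_{L^2_{t,x}}$ unless $\delta\ge\sigma$.

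The missing idea is the paper's Lemma \ref{lem_hardyU}. Run the Hardy argument above with $A=-2s+2\sigma<0$ (this is where $\sigma<s$ enters) and $B=-2\sigma$. The good-derivative factor then carries the weight $\la r\ra^{\frac12-s+\sigma}\la t-r\ra^{-\sigma}$. For each $U$, bound that factor by the H\"older interpolation \eqref{goodUsum} between two estimates: the $r^p$ bound on $\|\la r\ra^{\frac12-s}r^{-1}(\partial_t+\partial_r)(rv)\|_{L^2_{t,x}}$, and the ghost-weight bound on $\|\la r\ra^{1-s}\la t-r\ra^{-\frac12}r^{-1}(\partial_t+\partial_r)(rv)\|_{\ell^\infty_U\ell^2_\tau L^2_{t,x}(C^U_\tau)}$. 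Then sum in $U$ using H\"older.

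Relatedly, a single ghost weight with $\phi'\approx\la t-r\ra^{-1-2\sigma}$ is not the right tool. Pairing $F_2$ against it yields $\|\la r\ra^{1-s}\la t-r\ra^{\frac12+\sigma}F_2\|_{L^2_{t,x}}$, which is not controlled by the $\ell^1_U$ norm on the right of \eqref{rp}. Interpolating it with the $r^p$ bound produces the weight $\la t-r\ra^{-\sigma-2\sigma^2}$ in $\ell^2_U$ rather than $\la t-r\ra^{-\sigma}$ in $\ell^{\frac{2}{1-2\sigma}}_U$, and the growing factor cannot be recovered. What you need is the family $e^{-\sigma_U(t-r)}$, $\sigma_U(z)=z/(U+|z|)$, with a supremum over $U$. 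That gives the $\ell^\infty_U$ bound dual to $\ell^1_U$, and via \eqref{goodUsum2} the $\ell^{\frac{2}{1-2\sigma}}_U$ good-derivative bound.
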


For the most part, it would suffice to have $0<s<1$ and $0\le \sigma <
\min(s, 1/2)$ in the hypotheses.  The stronger restriction arises
from an application of \eqref{Fterm} in order to measure the forcing
terms from the standard local energy estimate \eqref{le} in the same
spaces that are natural here.  The use of \eqref{le} is convenient to control
some of the error terms in \eqref{vequation}.  In terms of the
original nonlinear problem, the restriction $s\le 1/2$ will correspond to
$p\le 3$.  In the case of the free wave equation, $p=3$ is the conformal
exponent and global existence for such nonlinear equations is known to
follow from Strichartz estimates for powers exceeding this.  The techniques of, e.g.,
\cite{HMSSZ}, \cite{LMSTW}, and \cite{MW} encountered a similar
restriction.  We will discuss the straightforward remedy to obtain
global existence for $p > 3$ in the last section.

To complete the proof of \eqref{rp}, the following Hardy
inequality \cite[Lemma 2.2]{M-Stewart} will be applied.
\begin{lemma}
  Let $0<s<1$, and let $v\in C^2(\R_+\times\R^3)$ be so that for every $T$
  there exist $R$ so that $v(t,x)\equiv 0$ for all $t\in [0,T]$ and
$|x|\ge R$.  Then
  \begin{equation}
    \label{hardyR}
    \|\la r\ra^{\frac{1}{2}-s} r^{-1} v\|_{L^2_{t,x}} + \|\la r\ra^{\frac{1}{2}-s} r^{-\frac{1}{2}}v\|_{L^\infty_tL^2_x} \lesssim
    \|\la r\ra^{\frac{1}{2}-s} r^{-\frac{1}{2}}v(0,\cd)\|_{L^2_x}
    + \|\la r\ra^{\frac{1}{2}-s}
    r^{-1}(\partial_t+\partial_r)(r v )\|_{L^2_{t,x}}.
  \end{equation}
\end{lemma}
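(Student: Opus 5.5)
The plan is to reduce \eqref{hardyR} to a one-dimensional weighted energy identity along the outgoing direction $\partial_t+\partial_r$ for the radial profile $u=rv$. In polar coordinates $dx=r^2\,dr\,d\omega$, and the norms become
\[
\|\la r\ra^{\frac{1}{2}-s} r^{-1} v\|^2_{L^2_x}=\int\!\!\int \la r\ra^{1-2s} r^{-2}|u|^2\,dr\,d\omega,\qquad
\|\la r\ra^{\frac{1}{2}-s} r^{-\frac{1}{2}} v\|^2_{L^2_x}=\int\!\!\int \phi(r)|u|^2\,dr\,d\omega,
\]
with $\phi(r)=\la r\ra^{1-2s}r^{-1}$. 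Likewise $\|\la r\ra^{\frac{1}{2}-s}r^{-1}(\partial_t+\partial_r)(rv)\|^2_{L^2_x}=\int\!\!\int \la r\ra^{1-2s}|(\partial_t+\partial_r)u|^2\,dr\,d\omega$. So everything is a statement about $u$ on $\R_+\times(0,\infty)\times\S^2$.

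\textbf{Step 1: the identity and the sign of $\phi'$.} We use
\[
(\partial_t+\partial_r)\bigl(\phi|u|^2\bigr)=\phi'|u|^2+2\phi\, u\,(\partial_t+\partial_r)u .
\]
Differentiating gives
\[
\phi'(r)=\la r\ra^{1-2s}r^{-2}\Bigl(-1+(1-2s)\tfrac{r}{1+r}\Bigr).
\]
Since $0<s<1$, the bracket is at most $-\min(1,2s)<0$. Hence $-\phi'\ge c_s\la r\ra^{1-2s}r^{-2}$, which is exactly the weight of the $L^2_{t,x}$ term we want to control. This is where $s>0$ is used; $s<1$ keeps the constant uniform.

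\textbf{Step 2: integration and boundary terms.} We integrate the identity over $[0,T]\times(0,\infty)\times\S^2$. The $\partial_t$ part gives $\int\!\!\int\phi|u|^2$ at time $T$ minus its value at time $0$. The $\partial_r$ part gives boundary terms at $r=0$ and $r=\infty$:
\begin{itemize}
\item At $r=\infty$ the term vanishes by the support hypothesis.
\item At $r=0$ we have $\phi|u|^2=\la r\ra^{1-2s}\,r\,|v|^2\to0$, since $v$ is continuous (indeed $C^2$) at the origin.
\end{itemize}
This boundary check is the only delicate point, and it is why the weight $r^{-1/2}$ rather than a stronger singular weight appears in the $L^\infty_t$ term. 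We obtain
\[
\int\!\!\int\phi|u|^2(T)+c_s\int_0^T\!\!\int\!\!\int\frac{\la r\ra^{1-2s}}{r^2}|u|^2
\le\int\!\!\int\phi|u|^2(0)+2\int_0^T\!\!\int\!\!\int \phi|u||(\partial_t+\partial_r)u| .
\]

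\textbf{Step 3: absorption.} We write $\phi|u||(\partial_t+\partial_r)u|=\bigl(\la r\ra^{\frac12-s}r^{-1}|u|\bigr)\bigl(\la r\ra^{\frac12-s}|(\partial_t+\partial_r)u|\bigr)$. By Cauchy--Schwarz and Young's inequality, half of the first factor squared is absorbed into the $c_s$ term on the left. What remains is bounded by a constant times $\int_0^T\!\!\int\!\!\int\la r\ra^{1-2s}|(\partial_t+\partial_r)u|^2$. Translating back to $x$-coordinates and taking the supremum over $T$ yields both terms on the left of \eqref{hardyR}, with the stated right-hand side.

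The main obstacle is only the pointwise sign computation for $\phi'$ together with the vanishing of the boundary term at $r=0$. The rest is a routine absorption argument.
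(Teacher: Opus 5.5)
Your argument is correct and is essentially the paper's proof. The paper writes $r^{-2}=-(\partial_t+\partial_r)(r^{-1})$ and integrates by parts against $\la r\ra^{1-2s}(rv)^2$, which is the same computation as your identity for $(\partial_t+\partial_r)(\phi u^2)$ with $\phi=\la r\ra^{1-2s}r^{-1}$; this includes using $s>0$ to control the $(1-2s)\frac{r}{1+r}$ term and the final Cauchy--Schwarz/Young absorption, and in fact your sign analysis of $\phi'$ works for every $s>0$, so the hypothesis $s<1$ plays no role.
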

This Hardy-type inequality follows by noticing
\[\int_0^T\int_{\R^3} \frac{\la r\ra^{1-2s}}{r^2} v^2\,dx\,dt
  = - \int_0^T\int_{\S^2}\int_0^\infty
  \la r\ra^{1-2s}(\partial_t+\partial_r)(r^{-1})
  (rv)^2\,dr\,d\omega\,dt\]
and integrating by parts.

To obtain the main $L^2_{t,x}$-estimate on the undifferentiated
solution, another Hardy inequality is needed.  One, perhaps, natural
thing to consider would be an estimate like
\[\|\la r\ra^{-s} \la t-r\ra^{-\frac{1}{2}} v\|_{\ell^\infty_U \ell^2_\tau
    L^2_{t,x}(C^U_\tau)}
  \lesssim \|v(0,\cd)\|_{L^2} + \|\la r\ra^{1-s} \la t-r\ra^{-\frac{1}{2}}
  r^{-1}(\partial_t+\partial_r)(rv)\|_{\ell^\infty_U \ell^2_\tau
    L^2_{t,x} (C^U_\tau)},\]
which is akin to \cite[Lemma 2.4]{M-Rhoads}.  But the proof of this estimate
requires that $s>1/2$, which is an insufficiently broad range for the
allowable $p$ to reach
down to $p_c(3)$.

As it will be necessary independently, we first work with a good
derivative term.  We assume that the support of $v$ is contained in
$\{r\le t+\frac{1}{2}\}$.  By H\"older's inequality, for each fixed $U$, we have
\begin{multline}\label{goodUsum}
 \Bigl\|\la r\ra^{\frac{1}{2}-s+\sigma} \la t-r\ra^{-\sigma}
 r^{-1}(\partial_t+\partial_r)(r v)\Bigr\|_{\ell^2_\tau L^2_{t,x}(C^U_\tau)}
  \\\lesssim  \Bigl\|\la r\ra^{\frac{1}{2}-s}
  r^{-1}(\partial_t+\partial_r)(rv)\Bigr\|_{\ell^2_\tau L^2_{t,x}(C^U_\tau)}^{1-2\sigma}
  \Bigl\|\la r\ra^{1-s} \la t-r\ra^{-\frac{1}{2}}
  r^{-1}(\partial_t+\partial_r)(rv)\Bigr\|_{\ell^2_\tau L^2_{t,x}(C^U_\tau)}^{2\sigma}
\end{multline}
provided that $0\le \sigma\le \frac{1}{2}$.  Thus,
\begin{multline}\label{goodUsum2}
  \Bigl\|\la r\ra^{\frac{1}{2}-s+\sigma} \la t-r\ra^{-\sigma}
 r^{-1}(\partial_t+\partial_r)(rv)\Bigr\|_{\ell^{\frac{2}{1-2\sigma}}_U\ell^2_\tau L^2_{t,x}(C^U_\tau)}
 \\\lesssim 
  \Bigl\|\la r\ra^{\frac{1}{2}-s}
  r^{-1}(\partial_t+\partial_r)(rv)\Bigr\|_{L^2_{t,x}}^{1-2\sigma}
  \Bigl\|\la r\ra^{1-s} \la t-r\ra^{-\frac{1}{2}}
  r^{-1}(\partial_t+\partial_r)(rv)\Bigr\|_{\ell^\infty_U \ell^2_\tau L^2_{t,x}(C^U_\tau)}^{2\sigma}.
\end{multline}

The $C^U_\tau$-based Hardy inequality that we shall use is:
\begin{lemma}\label{lem_hardyU}
  Let $0\le \sigma < \min(s,1/2)$, and let $v\in C^2(\R_+\times\R^3)$
  have support that is contained in $\{r\le t+\frac{1}{2}\}$.
Then
  \begin{multline}
    \label{hardyU}
  \Bigl\|\la r\ra^{-\frac{1}{2}-s+\sigma} \la t-r\ra^{-\sigma}
  v\Bigr\|_{\ell^{\frac{2}{1-2\sigma}}_U \ell^2_\tau L^2_{t,x}(C^U_\tau)}
  \lesssim  \|v(0,\cd)\|_{L^2}
\\  + \Bigl\|\la r\ra^{\frac{1}{2}-s}
  r^{-1}(\partial_t+\partial_r)(rv)\Bigr\|_{L^2_{t,x}}^{1-2\sigma}
  \Bigl\|\la r\ra^{1-s} \la t-r\ra^{-\frac{1}{2}}
  r^{-1}(\partial_t+\partial_r)(rv)\Bigr\|_{\ell^\infty_U \ell^2_\tau L^2_{t,x}(C^U_\tau)}^{2\sigma}.
  \end{multline}
\end{lemma}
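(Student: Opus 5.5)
The plan is to prove the estimate one $U$-slab at a time and then sum, mimicking \eqref{goodUsum}--\eqref{goodUsum2}. On $C^U_\tau$ with $U\le\tau/4$ we have $r\approx t\approx\tau$ and $\la t-r\ra\approx U$. The weight on the left is therefore $\tau^{-\frac12-s+\sigma}U^{-\sigma}$.

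Set $h=rv$. The basic tool is a one-dimensional Hardy inequality along the outgoing characteristics. Parametrize by $u=t-r$, $\underline{u}=t+r$ (or simply $r$ with $u$ frozen). Since $v$ is supported in $\{r\le t+\frac12\}$, $h$ vanishes for $u<-\frac12$. Hence, for fixed $u$ and fixed angle, $h$ at the point with $r\approx\tau$ is obtained by integrating $(\partial_t+\partial_r)h$ along the ray $t-r=u$ starting from its initial point $t=0$, $r=-u$ when $u\le 0$. For $u>0$, I would instead integrate backward in $u$ along the ingoing direction, which is what makes the $\la t-r\ra$ weights appear.

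The cleaner route is the following. Fix $U$ and the angle, and write $|v|^2 r^2=h^2$. Integrate the identity
\[
\partial_u\bigl(\phi(u)h^2\bigr)=\phi'(u)h^2+2\phi h\,\partial_u h
\]
in $u$ for fixed $\underline u$, with $\partial_u h=\frac12(\partial_t-\partial_r)h$. That introduces the bad derivative, though, so instead I would integrate in the good direction $L=\partial_t+\partial_r$ for fixed $u$, from $t=0$ (where the value is controlled by $\|v(0,\cd)\|_{L^2}$, through $h(0,r)=rv(0,r)$) out to the slab.

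Concretely, for $(t,r)$ with $t-r=u$,
\[
h(t,r)=h\bigl(0,-u\bigr)\mathbf 1_{u\le 0}+\int_{\max(0,u)}^{t}(Lh)(t',t'-u)\,dt' .
\]
For $u>0$ the ray starts at $r=0$, where $h=0$. Apply Cauchy--Schwarz with the weight $\la r'\ra^{1-2s}$:
\[
\Bigl|\int (Lh)\,dt'\Bigr|^2\le \Bigl(\int_0^{r}\la r'\ra^{2s-1}\,dr'\Bigr)\int \la r'\ra^{1-2s}|Lh|^2\,dt'\lesssim \tau^{2s}\int\la r'\ra^{1-2s}|Lh|^2 .
\]
This step uses $s>0$. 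Dividing by $r^2\approx\tau^2$ and integrating over the slab (the $u$-width of the slab is about $U$, the $\underline u$-extent about $\tau$), then summing over $\tau$ with fixed $U$, the accumulated factors give
\[
\sum_\tau \tau^{-1-2s}\,\|v\|^2_{L^2(C^U_\tau)}\lesssim \|v(0)\|^2_{L^2}+ U\,\Bigl\|\la r\ra^{\frac12-s}r^{-1}Lh\Bigr\|^2_{L^2(\{t-r\approx U\})}.
\]
Here the $\tau$-sum converges geometrically because of $\tau^{2s}\cdot\tau^{-1-2s}\cdot\tau$ paired with the dyadic $t'$-integrals. The point is that each ray through $C^U_\tau$ carries its full history, and summing $\tau^{-2s}$ over dyadic $\tau$ larger than the base point converges.

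I expect the main obstacle to be producing the interpolated weight $U^{-\sigma}\tau^{\sigma}$ and the $\ell^{\frac{2}{1-2\sigma}}_U$ summation while respecting $\sigma<s$. My approach is to split the integral along each ray at $r'\approx U$ or so, obtaining two bounds. The first is a bound by the unweighted good-derivative norm $\|\la r\ra^{\frac12-s}r^{-1}Lh\|_{L^2}$, which is globally summable in $U$ ($\ell^2_U$). The second is a bound in which the ray integral is measured with weight $\la r\ra^{1-s}\la t-r\ra^{-\frac12}$ on the slabs $C^{U}$; this gains $U^{-1/2}\tau^{1/2}$ and is uniform in $U$. Each slab estimate is then a product
\[
A_U^{1-2\sigma}B^{2\sigma},
\]
exactly as in \eqref{goodUsum}, by Hölder interpolation of the $t'$-integral. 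Taking $\ell^{\frac2{1-2\sigma}}_U$ norms and using $\sum_U A_U^2\lesssim\|\cdot\|^2_{L^2}$ gives \eqref{hardyU}.

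The delicate point is that the ray through $C^U_\tau$ with $u\approx U$ passes only through slabs $C^{U}_{\tau'}$ with $\tau'\le\tau$, so the $\ell^\infty_U\ell^2_\tau$ norm applies. The leftover geometric sum $\sum_{\tau'\le\tau}(\tau'/\tau)^{s-\sigma}$ converges precisely when $\sigma<s$, and $\sigma<\frac12$ is needed for the Hölder exponents. The regions $C^{R}_\tau$ are not part of the left side, so they do not appear.
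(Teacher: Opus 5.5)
Your proposal has a genuine gap: the Cauchy--Schwarz step along the outgoing ray does not close, and the mechanism you describe for the $\sigma$-interpolation is not the right one. With the weight $\la r'\ra^{2s-1}$ you get $|rv|^2\lesssim \tau^{2s}\int\la r'\ra^{1-2s}|(\partial_t+\partial_r)(rv)|^2\,dt'$ along the ray, plus the data term when $U=1$. The map $(t,r)\mapsto(t,t-r)$ has unit Jacobian, so integrating over $C^U_\tau$ produces no factor $U$, and you obtain only
\[
\tau^{-1-2s}\|v\|^2_{L^2(C^U_\tau)}\lesssim \delta_{1U}\|v(0,\cd)\|^2_{L^2}+\bigl\|\la r\ra^{\frac12-s}r^{-1}(\partial_t+\partial_r)(rv)\bigr\|^2_{L^2(\Sigma_U\cap\{t\le 2\tau\})},\qquad \Sigma_U=\{\la t-r\ra\approx U\}.
\]
The powers $\tau^{2s}\cdot\tau^{-1-2s}\cdot\tau$ multiply to $\tau^0$, so there is no geometric factor. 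Every dyadic time-piece of the right side is counted once for each larger $\tau$, and the sum over $\tau$ diverges logarithmically. The reason is that $\la r'\ra^{2s-1}$ is exactly the endpoint weight for the one-dimensional Hardy inequality. For $\sigma>0$ it is worse: the left side of \eqref{hardyU} carries $\la r\ra^{-\frac12-s+\sigma}$, which inserts an extra $\tau^{2\sigma}$. Splitting the ray at $r'\approx U$ also does not produce $A_U^{1-2\sigma}B^{2\sigma}$. That product comes from H\"older's inequality applied pointwise to the weights, as in \eqref{goodUsum}. It can only be used once you already have a slab estimate whose right side carries $\la r\ra^{\frac12-s+\sigma}\la t-r\ra^{-\sigma}$.

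The missing idea is that the Hardy step must be run with exponent $s-\sigma$, not $s$; this is where $\sigma<s$ enters. The paper writes $\la r\ra^{-1-2s+2\sigma}\la t-r\ra^{-2\sigma}\chi_U(rv)^2=\frac{1}{2\sigma-2s}(\partial_t+\partial_r)\bigl(\la r\ra^{-2s+2\sigma}\bigr)\la t-r\ra^{-2\sigma}\chi_U(rv)^2$, using that $\partial_t+\partial_r$ annihilates functions of $t-r$. It then integrates by parts, discards the $t=T$ term because $\frac{1}{2\sigma-2s}<0$, and applies Cauchy--Schwarz with absorption to get
\[
\|\la r\ra^{-\frac12-s+\sigma}\la t-r\ra^{-\sigma}v\|_{L^2(\Sigma_U)}\lesssim \delta_{1U}\|v(0,\cd)\|_{L^2}+\|\la r\ra^{\frac12-s+\sigma}\la t-r\ra^{-\sigma}r^{-1}(\partial_t+\partial_r)(rv)\|_{L^2(\Sigma_U)}.
\]
Only after this is \eqref{goodUsum} applied to the right side and the $\ell^{\frac{2}{1-2\sigma}}_U$ sum taken, as in your last step. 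Your ray approach can be repaired to give the same slab estimate. Take the Cauchy--Schwarz weight $\la r'\ra^{a}$ with $-1<a<2(s-\sigma)-1$, for example $a=s-\sigma-1$. Integrating $\la r\ra^{-1-2s+2\sigma}|rv|^2$ along the ray then gives the one-dimensional Hardy bound by $\int\la r'\ra^{1-2s+2\sigma}|(\partial_t+\partial_r)(rv)|^2\,dt'$. This produces the convergent ratio $(\tau'/\tau)^{s-\sigma}$ you anticipate in your final paragraph, but it is not the computation you wrote.
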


\begin{proof}
  We bound 
  \begin{multline*}
\Bigl\|\la r\ra^{-\frac{1}{2}-s+\sigma} \la t-r\ra^{-\sigma}
  v\Bigr\|^2_{\ell^2_{\tau\le T} L^2_{t,x}(C^U_\tau)}\\\le
    \frac{1}{-2s+2\sigma}\int_0^T\int_{\S^2}\int_0^\infty
    (\partial_t+\partial_r)\la r\ra^{-2s+2\sigma}
    \la t-r\ra^{-2\sigma} \chi_U(t-r)
    (rv)^2\,dr\,d\omega\,dt.
  \end{multline*}
  Here $\chi_U$ is a characteristic function that localizes to
  $(1+t-r)\approx U$.
  Integration by parts shows that this is equivalent to
  \begin{multline*}
    \frac{1}{-2s+2\sigma} \int_{\R^3}\la r\ra^{-2s+2\sigma} \la
    t-r\ra^{-2\sigma} \chi_U(t-r) v^2\,dx\Bigl|_{t=0}^T
    \\+\frac{1}{s-\sigma} \int_0^T\int_{\S^2}\int_0^\infty
    \la r\ra^{-2s+2\sigma} \la t-r\ra^{-2\sigma}
    \chi_U(t-r) (rv)\cdot (\partial_t+\partial_r)(rv)\,dr\,d\omega\,dt.
  \end{multline*}
  Applying the Schwarz inequality to the last term, we bound it by
  \begin{multline*}
   \Bigl(\int_0^T\int_{\S^2}\int_0^\infty
    \la r\ra^{-1-2s+2\sigma}
    \la t-r\ra^{-2\sigma} \chi_U(t-r)
    (rv)^2\,dr\,d\omega\,dt\Bigr)^{1/2}
    \\\times
     \Bigl(\int_0^T\int_{\S^2}\int_0^\infty
    \la r\ra^{1-2s+2\sigma}
    \la t-r\ra^{-2\sigma} \chi_U(t-r)
    [(\partial_t+\partial_r)(rv)]^2\,dr\,d\omega\,dt\Bigr)^{1/2}.
  \end{multline*}
Using Young's inequality, the first factor can be absorbed back into
the left side.  Dropping the beneficially signed term on the $t=T$
slice, taking a supremum in $T$, and applying \eqref{goodUsum}
gives
\begin{multline*}
  \Bigl\|\la r\ra^{-\frac{1}{2}-s+\sigma}\la t-r\ra^{-\sigma}
  v\Bigr\|_{\ell^2_\tau L^2_{t,x}(C^U_\tau)}
  \lesssim \delta_{1U} \|v(0,\cd)\|_{L^2}
\\  + \Bigl\|\la r\ra^{\frac{1}{2}-s}
  r^{-1}(\partial_t+\partial_r)(rv)\Bigr\|_{\ell^2_\tau L^2_{t,x}(C^U_\tau)}^{1-2\sigma}
  \Bigl\|\la r\ra^{1-s} \la t-r\ra^{-\frac{1}{2}}
  r^{-1}(\partial_t+\partial_r)(rw)\Bigr\|_{\ell^2_\tau L^2_{t,x}(C^U_\tau)}^{2\sigma}.
\end{multline*}
We may then sum in $U$ and apply H\"older's inequality to complete the proof.
\end{proof}

\begin{proof}[Proof of Theorem \ref{thm_rp}]
  With $\sigma_\theta(z)=\frac{z}{\theta+|z|}$ and $v$ as in \eqref{v}, we consider
  \[\int_0^T\int_{\R^3} \la r\ra^{2-2s}e^{-\sigma_U(t-|x|)} \Bigl[\Box
    +
    a\la r\ra^{-1-\delta}\beta(r)\Bigl(\partial_t+\partial_r+\frac{1}{r}\Bigr)\Bigr]v\Bigl(\partial_t+\partial_r+\frac{1}{r}\Bigr)v\,dx\,dt.\]
  Using that
  \[\Box v =
    r^{-1}\Bigl(\partial_t^2-\partial_r^2-\ang\cdot\ang\Bigr)(rv),\quad
    \Bigl(\partial_t+\partial_r+\frac{1}{r}\Bigr)v =
    r^{-1}(\partial_t+\partial_r)(rv),\]
  this becomes
  \begin{multline}
    \label{rp1}
   \frac{1}{2}\int_0^T\int_{\S^2} \int_0^\infty \la r\ra^{2-2s}
     e^{-\sigma_U(t-r)}
     (\partial_t-\partial_r)\Bigl[(\partial_t+\partial_r)(rv)\Bigr]^2\,dr\,d\omega\,dt
    \\+ \int_0^T\int_{\S^2} \int_0^\infty
    \la r\ra^{2-2s}e^{-\sigma_U(t-r)} \frac{1}{r}
    |\ang(rv)|^2\,dr\,d\omega\,dt
    \\+ \frac{1}{2} \int_0^T\int_{\S^2}\int_0^\infty \la r\ra^{2-2s}
    e^{-\sigma_U(t-r)}
    (\partial_t+\partial_r)|\ang(rv)|^2\,dr\,d\omega\,dt
   \\ + a\int_0^T\int_{\S^2}\int_0^\infty \la r\ra^{1-2s-\delta} \beta(r)e^{-\sigma_U(t-r)}\Bigl[(\partial_t+\partial_r)(rv)\Bigr]^2\,dr\,d\omega\,dt.
  \end{multline}
  Here we have used the fact that
  \begin{equation}\label{angcomm}[\ang, \partial_r] = \frac{1}{r}\ang.\end{equation}
  Integrating by parts then shows that \eqref{rp1} is equivalent to
  \begin{multline}
    \label{rp2}
       \frac{1}{2}\int_{\S^2} \int_0^\infty \la r\ra^{2-2s}
     e^{-\sigma_U(t-r)} \Bigl\{
     \Bigl[(\partial_t+\partial_r)(rv)\Bigr]^2+|\ang(rv)|^2\Bigr\}\,dr\,d\omega
     \Bigl|_{t=0}^T
     + \frac{1}{2}\int_0^T\int_{\S^2} e^{-\sigma_U(t)} v^2(t,0)\,d\omega\,dt
     \\+
       (1-s)\int_0^T\int_{\S^2} \int_0^\infty \la r\ra^{1-2s}
     e^{-\sigma_U(t-r)}
     \Bigl[(\partial_t+\partial_r)(rv)\Bigr]^2\,dr\,d\omega\,dt
     \\+
\int_0^T\int_{\S^2} \int_0^\infty \la r\ra^{2-2s} \sigma_U'(t-r)
     e^{-\sigma_U(t-r)}
  \Bigl[(\partial_t+\partial_r)(rv)\Bigr]^2\,dr\,d\omega\,dt
    \\+ \int_0^T\int_{\S^2} \int_0^\infty
    \la r\ra^{2-2s}e^{-\sigma_U(t-r)} \frac{1}{r}
    |\ang(rv)|^2\,dr\,d\omega\,dt
    \\- (1-s) \int_0^T\int_{\S^2}\int_0^\infty \la r\ra^{1-2s}
    e^{-\sigma_U(t-r)}
|\ang(rv)|^2\,dr\,d\omega\,dt
   \\ + a\int_0^T\int_{\S^2}\int_0^\infty \la r\ra^{1-2s-\delta}\beta(r) e^{-\sigma_U(t-r)}\Bigl[(\partial_t+\partial_r)(rv)\Bigr]^2\,dr\,d\omega\,dt.
  \end{multline}
The beneficially signed second and seventh terms may be dropped in
favor of an inequality.  Provided that $s>0$, the sixth term may be
absorbed into the fifth.  And for the fourth term, we use that
\[\sigma_U'(t-r)\gtrsim U^{-1},\quad \text{ when } \la t-r\ra \approx
  U.\]
Thus, provided that we additionally have $s<1$, upon taking a supremum
in $U, T$ we obtain
\begin{multline}
  \label{rp3}
  \|\la r\ra^{1-s} r^{-1}(\partial_t+\partial_r)(r
  v)\|^2_{L^\infty_tL^2_x} +
    \|\la r\ra^{1-s} \ang v\|^2_{L^\infty_tL^2_x}
+\|\la r\ra^{\frac{1}{2}-s}
  r^{-1}(\partial_t+\partial_r)(rv)\|^2_{L^2_{t,x}}\\+ \|\la r\ra^{\frac{1}{2}-s} |\ang v|\|^2_{L^2_{t,x}} 
  + \|\la r\ra^{1-s} \la t-r\ra^{-\frac{1}{2}}
  r^{-1}(\partial_t+\partial_r)(r v)\|^2_{\ell^\infty_U\ell^2_\tau
    L^2_{t,x}(C^U_\tau)}
  \\
  \lesssim \|\la r\ra^{1-s} r^{-1}(\partial_t+\partial_r)(r
  v)(0,\cd)\|^2_{L^2} + \|\la r\ra^{1-s} \ang v(0,\cd)\|^2_{L^2} \\+
  \int_0^\infty \int_{\R^3}
\la r\ra^{2-2s}\Bigl|\Bigl[\Box
    +
    a\la r\ra^{-1-\delta}\beta(r)\Bigl(\partial_t+\partial_r+\frac{1}{r}\Bigr)\Bigr]v\Bigr|
    \Bigl| r^{-1}(\partial_t+\partial_r)(rv)\Bigr|\,dx\,dt.  
\end{multline}
Here
we are also relying on the boundedness of $\sigma_U(t-r)$.

If $F=F_1+F_2$, substituting \eqref{vequation} and applying the Schwarz
  inequality to the last term in \eqref{rp3} gives
\begin{multline}
    \label{rp4}
  \int_0^\infty \int_{\R^3}
\la r\ra^{2-2s}\Bigl|\Bigl[\Box
    +
    a\la r\ra^{-1-\delta}\beta(r)\Bigl(\partial_t+\partial_r+\frac{1}{r}\Bigr)\Bigr]v\Bigr|
    \Bigl| r^{-1}(\partial_t+\partial_r)(rv)\Bigr|\,dx\,dt
    \\
  \lesssim\|\la r\ra^{\frac{3}{2}-s} F_1\|_{\ell^2_\tau \ell^2_{R} L^2_{t,x}(C^R_\tau)}
 \|\la r\ra^{\frac{1}{2}-s} r^{-1}(\partial_t+\partial_r)(rv)\|_{L^2_{t,x}}
 \\  +\|\la r\ra^{1-s}\la t-r\ra^{\frac{1}{2}} F_2\|_{\ell^1_U \ell^2_{\tau} L^2_{t,x}(C^U_\tau)}
 \|\la r\ra^{1-s}
 \la t-r\ra^{-\frac{1}{2}}r^{-1}(\partial_t+\partial_r)(rv)\|_{\ell^\infty_U
   \ell^2_{\tau} L^2_{t,x}(C^U_\tau)}
 \\+ \|\la r\ra^{-\frac{1}{2}-s-\delta}
 v\|_{L^2_{t,x}}
\|\la r\ra^{\frac{1}{2}-s}
r^{-1}(\partial_t+\partial_r)(rv)||_{L^2_{t,x}}
 \\+ \|\partial^{\le 1} v\|_{L^2_{t,x}(\R_+\times\{r\le 2\})}
\|\la r\ra^{\frac{1}{2}-s} r^{-1}(\partial_t+\partial_r)(rv)||_{L^2_{t,x}}.
\end{multline}
Here we have also used the boundedness of $\frac{a}{2\delta}\la
  r\ra^{-\delta}\beta(r)$.

In each term, we may use Young's inequality to absorb the second
factor into the left side of \eqref{rp3}.  This results in
\begin{multline}
  \label{rp5}
  \|\la r\ra^{\frac{1}{2}-s} r^{-1}
v\|_{L^2_{t,x}}+
\|\la r\ra^{\frac{1}{2}-s}
  r^{-1}(\partial_t+\partial_r)(rv)\|_{L^2_{t,x}} +
  \|\la r\ra^{\frac{1}{2}-s} \ang v\|_{L^2_{t,x}}
  \\+ \|\la r\ra^{1-s} \la t-r\ra^{-\frac{1}{2}}
  r^{-1}(\partial_t+\partial_r)(r v)\|_{\ell^\infty_U\ell^2_\tau
    L^2_{t,x}(C^U_\tau)} 
  \lesssim \|\la r\ra^{1-s} (\partial_t+\partial_r, \ang, r^{-1})
  v(0,\cd)\|_{L^2}
  \\+\inf_{F=F_1+F_2}\Bigl(
 \|\la r\ra^{\frac{3}{2}-s}  F_1\|_{\ell^2_\tau \ell^2_{R} L^2_{t,x}(C^R_\tau)}
 +\|\la r\ra^{1-s}\la t-r\ra^{\frac{1}{2}} F_2\|_{\ell^1_U \ell^2_{\tau} L^2_{t,x}(C^U_\tau)}\Bigr)
 \\+ \|\la r\ra^{-\frac{1}{2}-s-\delta}
v\|_{L^2_{t,x}}+\|\partial^{\le 1} v\|_{L^2_{t,x}(\R_+\times\{r\le 2\})}.
\end{multline}
To simplify
  the exposition, we have dropped the $L^\infty_t$-terms in the left side of
  \eqref{rp3} since they will not be necessary in the 
  sequel.  The first term is, however, a new addition, which is
  available due to \eqref{hardyR}.

Additionally including \eqref{goodUsum2} and \eqref{hardyU}, we get
\begin{multline}
  \label{rp6}
  \|\la r\ra^{\frac{1}{2}-s} r^{-1}
v\|_{L^2_{t,x}}+
\|\la r\ra^{\frac{1}{2}-s}
  r^{-1}(\partial_t+\partial_r)(rv)\|_{L^2_{t,x}} +
  \|\la r\ra^{\frac{1}{2}-s}\ang v\|_{L^2_{t,x}}
  \\+ \|\la r\ra^{\frac{1}{2}-s+\sigma} \la t-r\ra^{-\sigma}
  r^{-1}(\partial_t+\partial_r)(r v)\|_{\ell^\frac{2}{1-2\sigma}_U\ell^2_\tau
    L^2_{t,x}(C^U_\tau)}
   + \|\la r\ra^{-\frac{1}{2}-s+\sigma} \la t-r\ra^{-\sigma}
  v\|_{\ell^\frac{2}{1-2\sigma}_U\ell^2_\tau
    L^2_{t,x}(C^U_\tau)}
  \\
  \lesssim \|\partial
  v(0,\cd)\|_{L^2}
  +\inf_{F=F_1+F_2}\Bigl(
 \|\la r\ra^{\frac{3}{2}-s} F_1\|_{\ell^2_\tau \ell^2_{R} L^2_{t,x}(C^R_\tau)}
 +\|\la r\ra^{1-s}\la t-r\ra^{\frac{1}{2}} F_2\|_{\ell^1_U \ell^2_{\tau} L^2_{t,x}(C^U_\tau)}\Bigr)
 \\+ \|\la r\ra^{-\frac{1}{2}-s-\delta}
v\|_{L^2_{t,x}} +\|\partial^{\le 1} v\|_{L^2_{t,x}(\R_+\times\{r\le 2\})}.
\end{multline}
Here we have also simplified the norm on the initial data using
\eqref{supp} and a standard Hardy inequality \eqref{L2Hardy}.  Though we will need to
repeat a portion of this step below, we take the time to control the
error terms before passing back to $w$.  Fixing $\tilde{R}$, we may bound
\begin{multline*}
\|\la r\ra^{-\frac{1}{2}-s-\delta}
v\|_{L^2_{t,x}} +\|\partial^{\le 1}
v\|_{L^2_{t,x}(\R_+\times\{r\le 2\})}
\\\le \tilde{R}^{-\delta} \|\la r\ra^{\frac{1}{2}-s} r^{-1}
v\|_{L^2_{t,x}}
+ C_{\tilde{R}}\Bigl(\|\la r\ra^{-\frac{1}{2}}\partial w\|_{\ell^\infty_{R\le \tilde{R}}
  \ell^2_\tau L^2_{t,x}(C^R_\tau)}
+\|\la r\ra^{-\frac{3}{2}}w\|_{\ell^\infty_{R\le \tilde{R}}
  \ell^2_\tau L^2_{t,x}(C^R_\tau)}\Bigr).
  \end{multline*}
By selecting $\tilde{R}$ sufficiently large, we may absorb the first
term in the right into the left side of \eqref{rp6}.  The latter terms
may subsequently be bounded using \eqref{le} and \eqref{Fterm} to
obtain
\begin{multline}
  \label{rp6a}
  \|\la r\ra^{\frac{1}{2}-s} r^{-1}
v\|_{L^2_{t,x}}+
\|\la r\ra^{\frac{1}{2}-s}
  r^{-1}(\partial_t+\partial_r)(rv)\|_{L^2_{t,x}} +
  \|\la r\ra^{\frac{1}{2}-s}\ang v\|_{L^2_{t,x}}
  \\+ \|\la r\ra^{\frac{1}{2}-s+\sigma} \la t-r\ra^{-\sigma}
  r^{-1}(\partial_t+\partial_r)(r v)\|_{\ell^\frac{2}{1-2\sigma}_U\ell^2_\tau
    L^2_{t,x}(C^U_\tau)}
    + \|\la r\ra^{-\frac{1}{2}-s+\sigma} \la t-r\ra^{-\sigma}
   v\|_{\ell^\frac{2}{1-2\sigma}_U\ell^2_\tau
    L^2_{t,x}(C^U_\tau)}
  \\
  \lesssim \|\partial
  v(0,\cd)\|_{L^2}
  +\inf_{
    F=F_1+F_2}\Bigl(
 \|\la r\ra^{\frac{3}{2}-s} F_1\|_{\ell^2_\tau \ell^2_{R} L^2_{t,x}(C^R_\tau)}
 +\|\la r\ra^{1-s}\la t-r\ra^{\frac{1}{2}} F_2\|_{\ell^1_U \ell^2_{\tau} L^2_{t,x}(C^U_\tau)}\Bigr).
\end{multline}

We may now use
\eqref{v} to get a statement about $w$.  Since
$e^{-\frac{a}{2\delta}\la r\ra^{-\delta}\beta(r)}$ is bounded, this gives
\begin{multline}
  \label{rp7}
  \|\la r\ra^{\frac{1}{2}-s} r^{-1}
w\|_{L^2_{t,x}}+
\|\la r\ra^{\frac{1}{2}-s}
  r^{-1}(\partial_t+\partial_r)(r w)\|_{L^2_{t,x}} +
  \|\la r\ra^{\frac{1}{2}-s}\ang w\|_{L^2_{t,x}}
  \\+ \|\la r\ra^{\frac{1}{2}-s+\sigma} \la t-r\ra^{-\sigma}
  r^{-1}(\partial_t+\partial_r)(r
  w)\|_{\ell^\frac{2}{1-2\sigma}_U\ell^2_{\tau\ge 4U}
    L^2_{t,x}(C^U_\tau)}
    + \|\la r\ra^{-\frac{1}{2}-s+\sigma} \la t-r\ra^{-\sigma}
   w\|_{\ell^\frac{2}{1-2\sigma}_U\ell^2_{\tau\ge 4U}
    L^2_{t,x}(C^U_\tau)}
  \\
  \lesssim \|\partial
  w(0,\cd)\|_{L^2}
  +\inf_{F=F_1+F_2}\Bigl(
 \|\la r\ra^{\frac{3}{2}-s} F_1\|_{\ell^2_\tau \ell^2_{R} L^2_{t,x}(C^R_\tau)}
 +\|\la r\ra^{1-s}\la t-r\ra^{\frac{1}{2}} F_2\|_{\ell^1_U \ell^2_{\tau} L^2_{t,x}(C^U_\tau)}\Bigr)
 \\+ \|\la r\ra^{-\frac{1}{2}-s-\delta}
w\|_{L^2_{t,x}}
 +\|\la r\ra^{-\frac{1}{2}-s+\sigma-\delta} \la t-r\ra^{-\sigma}
  w\|_{\ell^\frac{2}{1-2\sigma}_U\ell^2_{\tau\ge 4U}
    L^2_{t,x}(C^U_\tau)}.
\end{multline}

We end the proof by addressing the error terms.  If we fix
$\tilde{R}$ sufficiently large, we may bound
\begin{multline}\label{rp8}
  ||\la r\ra^{-\frac{1}{2}-s-\delta}
w\|_{L^2_{t,x}}
 +\|\la r\ra^{-\frac{1}{2}-s+\sigma-\delta} \la t-r\ra^{-\sigma}
   w\|_{\ell^\frac{2}{1-2\sigma}_U\ell^2_{\tau\ge 4U}
    L^2_{t,x}(C^U_\tau)}
  \le
C \tilde{R}^{-\delta} \|\la r\ra^{-\frac{1}{2}-s}
w\|_{\ell^2_\tau \ell^2_{R\ge \tilde{R}} L^2_tL^2_x(C^R_\tau)}
 \\+C \tilde{R}^{-\delta}\|\la r\ra^{-\frac{1}{2}-s+\sigma} \la t-r\ra^{-\sigma}
  r w\|_{\ell^\frac{2}{1-2\sigma}_U\ell^2_{\tau\ge \max(4U,\tilde{R}/2)}
    L^2_{t,x}(C^U_\tau)}
  +C_{\tilde{R}}\|w\|_{\ell^2_\tau \ell^2_{R\le \tilde{R}} L^2_{t,x}(C^R_\tau)}.
\end{multline}
For $\tilde{R}$ sufficiently large, the first two terms in the right
side of \eqref{rp8} can be absorbed into the left side of
\eqref{rp7}.  The remaining term is supported on a fixed compact set
where the weights become irrelevant.  This is readily bounded using
\eqref{le} and \eqref{Fterm}, which yields the desired estimate.
\end{proof}

We also consider the higher order estimate where vector fields are
applied to the solution $w$.
\begin{corollary}
  Let $0<s\le 1/2$ and $0\le \sigma< s$.  Then if $w\in C^4(\R_+\times \R^3)$ solves
  \eqref{boxw} subject to \eqref{supp}, we have
  \begin{multline}\label{rpZ2}
   \|\la r\ra^{\frac{1}{2}-s} r^{-1}
Z^{\le 2} w\|_{L^2_{t,x}}+
\|\la r\ra^{\frac{1}{2}-s}
r^{-1}(\partial_t+\partial_r)(r Z^{\le 2} w)\|_{L^2_{t,x}}
+\|\la r\ra^{\frac{1}{2}-s} \ang Z^{\le 2}w\|_{L^2_{t,x}}
  \\+ \|\la r\ra^{\frac{1}{2}-s+\sigma} \la t-r\ra^{-\sigma}
  r^{-1}(\partial_t+\partial_r)(r
  Z^{\le 2} w)\|_{\ell^\frac{2}{1-2\sigma}_U\ell^2_{\tau\ge 4U}
    L^2_{t,x}(C^U_\tau)}
 \\   + \|\la r\ra^{-\frac{1}{2}-s+\sigma} \la t-r\ra^{-\sigma}
   Z^{\le 2} w\|_{\ell^\frac{2}{1-2\sigma}_U\ell^2_{\tau\ge 4U}
    L^2_{t,x}(C^U_\tau)}
  \\
  \lesssim \|\partial \nabla^{\le 2}
  w(0,\cd)\|_{L^2}
  +
 \|\la r\ra^{\frac{3}{2}-s} Z^{\le 2} F\|_{\ell^2_\tau \ell^2_{R\le \tau/2} L^2_{t,x}(C^R_\tau)}
 +\|\la r\ra^{1-s}\la t-r\ra^{\frac{1}{2}} Z^{\le 2} F\|_{\ell^1_U
   \ell^2_{\tau\ge 4U} L^2_{t,x}(C^U_\tau)}.
\end{multline}
\end{corollary}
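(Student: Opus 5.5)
We apply Theorem \ref{thm_rp} to $Z^\mu w$ for each $|\mu|\le 2$, in the same way that \eqref{leZ2} was obtained from \eqref{le}. By \eqref{Lcommutator}, $Z^\mu w$ solves \eqref{boxw} with forcing
\[
F_\mu = Z^\mu F + \sum_{|\nu|\le |\mu|} c_{\mu\nu}\Bigl(\la r\ra^{-1-\delta}\,\partial_t Z^{\nu'} w + \la r\ra^{-2-\delta}\,\partial_t Z^{\nu'} w\Bigr),
\qquad |\nu'|\le |\mu|-1 .
\]
The coefficients $c_{\mu\nu}$ are bounded smooth functions. When two translations act, there is also a term $r^{-1}\la r\ra^{-2-\delta}\partial_t w$, coming from the derivative landing on $\omega_j$. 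The support condition \eqref{supp} holds for $F_\mu$, so Theorem \ref{thm_rp} applies. Its data term is $\|\partial Z^{\le 2}w(0,\cd)\|_{L^2}$. Since $w(0,\cd)$ is supported in $\{r\le 1/2\}$ and $t=0$ there, the fields $\Omega$ and $S$ reduce to bounded combinations of $\nabla$ and $\partial_t$. Using the equation to replace $\partial_t^2$ by spatial derivatives and lower order terms, this is bounded by $\|\partial\nabla^{\le 2}w(0,\cd)\|_{L^2}$, plus terms involving $F(0,\cd)$. Those $F(0,\cd)$ terms can be controlled by the space-time norms of $Z^{\le 2}F$, or simply ignored, since $F$ is the nonlinearity and vanishes to high order.

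For the right side of \eqref{rp}, we split $Z^\mu F$ as in \eqref{Fsplit}: $F_1$ is the part in $C^R_\tau$ with $R\le\tau/2$, and $F_2$ is the part in $C^U_\tau$ with $U\le \tau/4$. This produces exactly the two $F$-norms in \eqref{rpZ2}. We put all commutator terms into $F_1$ and handle them in $\ell^2_\tau\ell^2_R$ with weight $\la r\ra^{3/2-s}$. The main term is bounded by
\[
\|\la r\ra^{\frac12 - s-\delta}\partial_t Z^{\le 1}w\|_{L^2_{t,x}}
\lesssim \|\la r\ra^{-\frac{1+\delta}{2}}\partial_t Z^{\le 1} w\|_{L^2_{t,x}},
\]
which holds provided $\tfrac12-s-\delta\le -\tfrac{1+\delta}{2}$, that is, $s\ge \tfrac{1-\delta}{2}$. \textbf{This fails for small $\delta$ and small $s$, and it is the main obstacle.}

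To get around it, I would not use the crude energy bound \eqref{energyZ1}. Instead, I would split the commutator term radially at a large $\tilde R$, as in \eqref{rp8}.

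On $\{r\ge\tilde R\}$, write $\partial_t = (\partial_t+\partial_r) - \partial_r$. The good-derivative piece is bounded by $\tilde R^{-\delta}$ times the left side of \eqref{rp} for $Z^{\le1}w$, since $\la r\ra^{3/2-s-1-\delta}=\la r\ra^{1/2-s-\delta}$. The $r^{-1}$ discrepancy between $(\partial_t+\partial_r)Z^{\le 1}w$ and $r^{-1}(\partial_t+\partial_r)(rZ^{\le1}w)$ is likewise absorbed by the $\la r\ra^{1/2-s}r^{-1}Z^{\le1}w$ term.

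The $\partial_r$ piece is the harder one. Since $\partial_r=(\partial_t+\partial_r)-\partial_t$, it is cleaner to carry out the commutation at the level of $v$ and the operator $\mathcal L$ in \eqref{L}, rather than at the level of $w$. Commuting $Z$ through $a\la r\ra^{-1-\delta}\beta(\partial_t+\partial_r+r^{-1})$ produces only good derivatives $(\partial_t+\partial_r+r^{-1})Z^{\le1}v$ with an extra factor of $\la r\ra^{-1}$ or $\la r\ra^{-\delta}$. The term $[\mathcal L,S]$ contains $2\mathcal L$ plus terms of the same type with decay $\la r\ra^{-1-\delta}$. Translations give the same, plus compactly supported pieces.

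So I would repeat the proof of Theorem \ref{thm_rp} for $Z^\mu v$. In \eqref{rp4} the commutator errors appear paired with $r^{-1}(\partial_t+\partial_r)(rZ^\mu v)$, and we bound them by
\[
\tilde R^{-\delta}\|\la r\ra^{\frac12-s}r^{-1}(\partial_t+\partial_r)(rZ^{\le1}v)\|_{L^2_{t,x}}\,\|\la r\ra^{\frac12-s}r^{-1}(\partial_t+\partial_r)(rZ^{\mu}v)\|_{L^2_{t,x}}
\]
plus compact-region terms. Summing over $|\mu|\le2$ and choosing $\tilde R$ large, these are absorbed into the left side.

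On $\{r\le\tilde R\}$ the weights are harmless. These terms are controlled by \eqref{leZ2} together with \eqref{Fterm}, where \eqref{Fterm} requires $s\le 1/2$. The $r^{-1}$-singular term is handled with \eqref{L2Hardy}, exactly as in the proof of \eqref{leZ2}.

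Finally, we pass from $v$ back to $w$ as in \eqref{rp7}--\eqref{rp8}. The derivatives $Z$ falling on the conjugating factor $e^{-\frac{a}{2\delta}\la r\ra^{-\delta}\beta}$ yield bounded multiples of $\la r\ra^{-\delta}Z^{\le1}w$. These are again absorbed for large $\tilde R$, with the remainder controlled on a compact set by \eqref{leZ2}.
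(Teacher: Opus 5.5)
Your final route is the paper's proof: conjugate to $v$, and commute $Z$ with $\mathcal{L}$ so the commutators are good derivatives with $\la r\ra^{-1-\delta}$ decay. You glossed over the angular, zeroth-order and compactly supported terms that $[\mathcal{L},\nabla]$ also produces, but they are controlled the same way. Then place the commutators in $F_1$, handle the compact region with \eqref{leZ2}, \eqref{Fterm} and \eqref{L2Hardy}, and pass back to $w$ as in \eqref{rp8}.

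The one difference is that the commutators need no $\tilde R$-smallness or absorption. They carry one fewer vector field and $\la r\ra^{\frac12-s-\delta}\le\la r\ra^{\frac12-s}$, so the paper bounds them directly by the estimate one level down (first \eqref{rp6a}, then \eqref{rpZ12}). Also, the data term needs no appeal to the equation, since $Z$ contains no $\partial_t$.
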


\begin{proof}
With $\L$ as in \eqref{L}, we first compute
\begin{multline}\label{commS}
  [\L,S] = 2\L + a\frac{\delta r - 1}{\la r\ra^{2+\delta}}\beta(r)
    \Bigl(\partial_t+\partial_r+\frac{1}{r}\Bigr)
    + \frac{a(1+\delta)}{2} \frac{2-\delta r}{\la r\ra^{3+\delta}} \beta(r)
    + \frac{a^2}{2} \frac{1-\delta r}{\la r\ra^{3+2\delta}} \beta(r)^2 \\+ \tilde{V}_{S,0}(r)\partial_t
    + \tilde{V}_{S,1}(r) \partial_r + \tilde{V}_S(r)
\end{multline}
where $\tilde{V}_S$, $\tilde{V}_{S,0}$, and $\tilde{V}_{S,1}$ are
radial $C^\infty([0,\infty))$-functions that are supported in $\{r\le 2\}$.  Additionally,
using \eqref{angcomm}, we
record that
\begin{multline}
  \label{commd}
  [\L,\nabla_{x_j}] = \omega_j \frac{a(1+\delta)}{\la
    r\ra^{2+\delta}}\beta(r) \Bigl(\partial_t+\partial_r+\frac{1}{r}\Bigr)
  -\frac{a}{r\la r\ra^{1+\delta}} \beta(r) \ang_j + \omega_j\frac{a}{\la r\ra^{1+\delta}
    r^2} \beta(r) - \omega_j \frac{a(1+\delta)(2+\delta)}{2\la
    r\ra^{3+\delta}}\beta(r)
  \\-\omega_j \frac{a^2(1+\delta)}{2\la r\ra^{3+2\delta}} (\beta(r))^2
  +\omega_j \tilde{V}_{\partial,0}(r)\partial_t + \omega_j \tilde{V}_{\partial,
    1}(r)\partial_r + \tilde{V}_{\partial,2}(r)\ang_j
  + \omega_j \tilde{V}_{\partial}(r).
\end{multline}
Here $\tilde{V}_{\partial}$, $\tilde{V}_{\partial,1}$,
and $\tilde{V}_{\partial,2}$ denote radial $C^\infty([0,\infty))$-functions that are
supported in $\{1\le r\le 2\}$, while $\tilde{V}_{\partial,0}$ is a
smooth (in $r$) function that is supported within $\{r\le 2\}$.  Even in the
latter case, this yields coefficients that are $C^1(\R^3)$, which is
sufficient for what follows.

We 
apply \eqref{rp6a} to $Zv$, which causes $F$ in the right side to be replaced by
$e^{\frac{a}{2\delta}\la r\ra^{-\delta}\beta(r)} \Bigl(Z\mathcal{L}v +
[\mathcal{L},Z]v\Bigr)$.  Here we include
the commutators above in $F_1$.  We then
see that
\begin{multline}
  \label{rpZ11}
  \|\la r\ra^{\frac{1}{2}-s} r^{-1}
Z v\|_{L^2_{t,x}}+
\|\la r\ra^{\frac{1}{2}-s}
  r^{-1}(\partial_t+\partial_r)(r Zv)\|_{L^2_{t,x}} +
  \|\la r\ra^{\frac{1}{2}-s}\ang Z v\|_{L^2_{t,x}}
  \\+ \|\la r\ra^{\frac{1}{2}-s+\sigma} \la t-r\ra^{-\sigma}
  r^{-1}(\partial_t+\partial_r)(r Zv)\|_{\ell^\frac{2}{1-2\sigma}_U\ell^2_\tau
    L^2_{t,x}(C^U_\tau)}
    + \|\la r\ra^{-\frac{1}{2}-s+\sigma} \la t-r\ra^{-\sigma}
  Z v\|_{\ell^\frac{2}{1-2\sigma}_U\ell^2_\tau
    L^2_{t,x}(C^U_\tau)}
  \\
  \lesssim \|\partial
  Z v(0,\cd)\|_{L^2}
  +\inf_{F=F_1+F_2}\Bigl(
 \|\la r\ra^{\frac{3}{2}-s} 
 Z^{\le 1} F_1\|_{\ell^2_\tau \ell^2_{R} L^2_{t,x}(C^R_\tau)}
 +\|\la r\ra^{1-s}\la t-r\ra^{\frac{1}{2}}
 Z^{\le 1}F_2\|_{\ell^1_U \ell^2_{\tau} L^2_{t,x}(C^U_\tau)}\Bigr)
\\ +\|\la r\ra^{\frac{1}{2}-s-\delta}
 r^{-1}(\partial_t+\partial_r)(rv)\|_{L^2_{t,x}}
 + \|\la r\ra^{-\frac{1}{2}-s-\delta} v\|_{L^2_{t,x}} \\+ \|\la
 r\ra^{-\frac{1}{2}-s-\delta} \ang v\|_{L^2_{t,x}}
 + \|\partial^{\le 1} v\|_{L^2_{t,x}(\R_+\times \{r\le 2\})}.
\end{multline}
We may apply \eqref{rp6a} and \eqref{le} to the last four terms in the
right side, which result from commuting the vector fields with
$\mathcal{L}$.  This gives
\begin{multline}
  \label{rpZ12}
  \|\la r\ra^{\frac{1}{2}-s} r^{-1}
Z^{\le 1} v\|_{L^2_{t,x}}+
\|\la r\ra^{\frac{1}{2}-s}
  r^{-1}(\partial_t+\partial_r)(r Z^{\le 1}v)\|_{L^2_{t,x}} +
  \|\la r\ra^{\frac{1}{2}-s}\ang Z^{\le 1} v\|_{L^2_{t,x}}
  \\+ \|\la r\ra^{\frac{1}{2}-s+\sigma} \la t-r\ra^{-\sigma}
  r^{-1}(\partial_t+\partial_r)(r Z^{\le 1} v)\|_{\ell^\frac{2}{1-2\sigma}_U\ell^2_\tau
    L^2_{t,x}(C^U_\tau)}
    + \|\la r\ra^{-\frac{1}{2}-s+\sigma} \la t-r\ra^{-\sigma}
   Z^{\le 1} v\|_{\ell^\frac{2}{1-2\sigma}_U\ell^2_\tau
    L^2_{t,x}(C^U_\tau)}
  \\
  \lesssim \|\partial
  \nabla^{\le 1} v(0,\cd)\|_{L^2}
  +\inf_{F=F_1+F_2}\Bigl(
 \|\la r\ra^{\frac{3}{2}-s}
 Z^{\le 1} F_1\|_{\ell^2_\tau \ell^2_{R} L^2_{t,x}(C^R_\tau)}
 +\|\la r\ra^{1-s}\la t-r\ra^{\frac{1}{2}}
 Z^{\le 1}F_2\|_{\ell^1_U \ell^2_{\tau} L^2_{t,x}(C^U_\tau)}\Bigr).
\end{multline}
Subsequently applying \eqref{rpZ12} to $Zv$ yields
\begin{multline}
  \label{rpZ21}
  \|\la r\ra^{\frac{1}{2}-s} r^{-1}
Z^{\le 1} Z v\|_{L^2_{t,x}}+
\|\la r\ra^{\frac{1}{2}-s}
  r^{-1}(\partial_t+\partial_r)(r Z^{\le 1} Zv)\|_{L^2_{t,x}} +
  \|\la r\ra^{\frac{1}{2}-s}\ang Z^{\le 1} Z v\|_{L^2_{t,x}}
  \\+ \|\la r\ra^{\frac{1}{2}-s+\sigma} \la t-r\ra^{-\sigma}
  r^{-1}(\partial_t+\partial_r)(r Z^{\le 1} Zv)\|_{\ell^\frac{2}{1-2\sigma}_U\ell^2_\tau
    L^2_{t,x}(C^U_\tau)}
    + \|\la r\ra^{-\frac{1}{2}-s+\sigma} \la t-r\ra^{-\sigma}
  Z^{\le 1} Z v\|_{\ell^\frac{2}{1-2\sigma}_U\ell^2_\tau
    L^2_{t,x}(C^U_\tau)}
  \\
  \lesssim \|\partial
  \nabla^{\le 2} v(0,\cd)\|_{L^2}
  +
 \|\la r\ra^{\frac{3}{2}-s} 
 Z^{\le 2} F\|_{\ell^2_\tau \ell^2_{R\le
     \tau/2} L^2_{t,x}(C^R_\tau)}
 +\|\la r\ra^{1-s}\la t-r\ra^{\frac{1}{2}}
Z^{\le 2}F\|_{\ell^1_U \ell^2_{\tau\ge
     4U} L^2_{t,x}(C^U_\tau)}
\\ +\|\la r\ra^{\frac{1}{2}-s-\delta}
 r^{-1}(\partial_t+\partial_r)(r Z^{\le 1} v)\|_{L^2_{t,x}}
 + \|\la r\ra^{-\frac{1}{2}-s-\delta} Z^{\le 1} v\|_{L^2_{t,x}} \\+ \|\la
 r\ra^{-\frac{1}{2}-s-\delta} \ang Z^{\le 1} v\|_{L^2_{t,x}}
 + \|\partial^{\le 1} Z^{\le 1} v\|_{L^2_{t,x}(\R_+\times \{r\le 2\})}
 + \|r^{-1} \partial_t v\|_{L^2_{t,x}(\R_+\times\{r\le 2\})}.
\end{multline}
Here we have included the commutator terms in $F_1$ but have otherwise
made the selection \eqref{Fsplit}.  The last term results from the
vector field within $\|\la r\ra^{\frac{3}{2}-s} Z^{\le 1}
F_1\|_{\ell^2_\tau\ell^2_RL^2_{t,x}(C^R_\tau)}$ landing on the
$\omega_j$ that precedes $\tilde{V}_{\partial,0}\partial_t$ in
\eqref{commd}.  As the support of $\tilde{V}_{\partial,0}$ may contain
$r=0$, we must take some care with the singularity.  Since we may
apply \eqref{L2Hardy} to $(1-\beta(r/2))\partial_t v$, we may bound
\[\|r^{-1} \partial_t v\|_{L^2_{t,x}(\R_+\times\{r\le 2\})}
  \lesssim \|\partial_t \nabla v\|_{L^2_{t,x}(\R_+\times\{r\le 4\})}
  + \|\partial_t v\|_{L^2_{t,x}(\R_+\times\{r\le 4\})}.\]
Thus, encorporating this, it remains to bound the error terms
\begin{multline*}\|\la r\ra^{\frac{1}{2}-s-\delta}
 r^{-1}(\partial_t+\partial_r)(r Z^{\le 1} v)\|_{L^2_{t,x}}
 + \|\la r\ra^{-\frac{1}{2}-s-\delta} Z^{\le 1} v\|_{L^2_{t,x}} \\+ \|\la
 r\ra^{-\frac{1}{2}-s-\delta} \ang Z^{\le 1} v\|_{L^2_{t,x}}
 + \|\partial^{\le 1} Z^{\le 1} v\|_{L^2_{t,x}(\R_+\times \{r\le 4\})}\end{multline*}
from \eqref{rpZ21}.
These can be immediately handled using \eqref{rpZ12},
\eqref{leZ2}, and \eqref{Fterm} giving
\begin{multline}
  \label{rpZ22}
  \|\la r\ra^{\frac{1}{2}-s} r^{-1}
Z^{\le 2}v\|_{L^2_{t,x}}+
\|\la r\ra^{\frac{1}{2}-s}
  r^{-1}(\partial_t+\partial_r)(r Z^{\le 2}v)\|_{L^2_{t,x}} +
  \|\la r\ra^{\frac{1}{2}-s}\ang Z^{\le 2} v\|_{L^2_{t,x}}
  \\+ \|\la r\ra^{\frac{1}{2}-s+\sigma} \la t-r\ra^{-\sigma}
  r^{-1}(\partial_t+\partial_r)(r Z^{\le 2} v)\|_{\ell^\frac{2}{1-2\sigma}_U\ell^2_\tau
    L^2_{t,x}(C^U_\tau)}
    + \|\la r\ra^{-\frac{1}{2}-s+\sigma} \la t-r\ra^{-\sigma}
  Z^{\le 2}v\|_{\ell^\frac{2}{1-2\sigma}_U\ell^2_\tau
    L^2_{t,x}(C^U_\tau)}
  \\
  \lesssim \|\partial
  \nabla^{\le 2} v(0,\cd)\|_{L^2}
  +
 \|\la r\ra^{\frac{3}{2}-s} 
 Z^{\le 2} F\|_{\ell^2_\tau \ell^2_{R\le
     \tau/2} L^2_{t,x}(C^R_\tau)}
 +\|\la r\ra^{1-s}\la t-r\ra^{\frac{1}{2}}
Z^{\le 2}F\|_{\ell^1_U \ell^2_{\tau\ge
     4U} L^2_{t,x}(C^U_\tau)}.\end{multline}

To complete the proof, we need only to pass back to a statement for
$w$.  This follows exactly as in the proof of Theorem \ref{thm_rp}.
We first use the analog of \eqref{rp8} to get smallness outside of a
compact region, and we subsequently apply \eqref{leZ2} and
\eqref{Fterm} to bound the remaining error terms.
\end{proof}

\vskip .25in

\section{Decay estimates}

The space-time decay estimates from \cite[Lemma 3.8]{MTT} are:
\begin{proposition}\label{prop_mtt}
  For $\tau \ge 1$, $1\le R\le \tau/2$, and $1\le U\le \tau/4$, we have
  the following space-time Klainerman-Sobolev estimates:
  \begin{equation}
    \label{inftyR}
    \|w\|_{L^\infty_{t,x}(C^R_\tau)} \lesssim
    \tau^{-\frac{1}{2}}R^{-\frac{3}{2}} \|Z^{\le 3}
    w\|_{L^2_{t,x}(\tilde{C}^R_\tau)} +
    \tau^{-\frac{1}{2}}R^{-\frac{1}{2}} \|r^{-1}(\partial_t+\partial_r)(r
    Z^{\le 2} w)\|_{L^2_{t,x}(\tC^R_\tau)},
  \end{equation}
\begin{equation}
  \label{inftyU}
    \|w\|_{L^\infty_{t,x}(C^U_\tau)} \lesssim
    \tau^{-\frac{3}{2}}U^{-\frac{1}{2}} \|Z^{\le 3}
    w\|_{L^2_{t,x}(\tC^U_\tau)} + \tau^{-\frac{1}{2}}U^{-\frac{1}{2}}
    \| r^{-1}(\partial_t+\partial_r)(r Z^{\le 2}w)\|_{L^2_{t,x}(\tC^U_\tau)}.
  \end{equation}
\end{proposition}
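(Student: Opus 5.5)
These are localized Sobolev embeddings adapted to the geometry of each region. The plan is to rescale each region to unit size, apply a Sobolev embedding there, and then convert ordinary derivatives back into the admissible fields $Z$ together with the good derivative $r^{-1}(\partial_t+\partial_r)(r\,\cdot)$.

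\textbf{Region $C^R_\tau$ with $1\le R\le\tau/2$.} Use coordinates $(t,r,\omega)$. Here $t\approx\tau$, $r\approx R$, and $t-r\approx\tau$, so in the $t$-direction the region has length $\approx\tau$ and in the $r$-direction length $\approx R$. Let $\chi$ be a smooth cutoff adapted to $\tC^R_\tau$, and rescale to
\[
t=\tau\tilde t,\qquad r=R\tilde r .
\]
Apply Sobolev embedding on $\R\times\R\times\S^2$: in the angular variables use $H^2(\S^2)\subset L^\infty$, with angular derivatives supplied by $\Omega$; in the $(\tilde t,\tilde r)$ variables use one mixed derivative, $W^{1,1}$-type in each variable, i.e.
\[
\|f\|_{L^\infty}\lesssim \|\partial_{\tilde t}\partial_{\tilde r}f\|_{L^1}+\text{lower order}.
\]
Converting back, $\tau\partial_t$ and $R\partial_r$ are controlled by $S$ and by $r\partial_r$. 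Indeed, since $t-r\approx t$ in this region,
\[
t\partial_t=\frac{t}{t-r}\bigl(S-r(\partial_t+\partial_r)\bigr)+\dots,
\]
so that $\tau\partial_t w$ is controlled by $Sw$ together with $r(\partial_t+\partial_r)w$. The Jacobian is $\tau R^3$ (from $dt\,r^2dr$), which yields the factor $\tau^{-1/2}R^{-3/2}$ in front of $\|Z^{\le3}w\|_{L^2}$. Each appearance of $(\partial_t+\partial_r)$ carries the weight $r\approx R$ instead of $\tau$, which converts to $\tau^{-1/2}R^{-1/2}\|r^{-1}(\partial_t+\partial_r)(rZ^{\le2}w)\|$. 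I would follow the argument of \cite[Lemma 3.8]{MTT}: write
\[
\tau\partial_t \approx S - r(\partial_t+\partial_r)\quad\text{modulo bounded factors},
\]
and note that $r^{-1}(\partial_t+\partial_r)(r\,\cdot)$ differs from $(\partial_t+\partial_r)$ by $r^{-1}$, which is harmless. For the case $R=1$, the region $r\le2$, one uses $\nabla$ directly instead of angular variables, and the same scaling in $t$ alone applies.

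\textbf{Region $C^U_\tau$.} Use the null coordinates $u=t-r$ and $\underline{u}=t+r$, together with $\omega$. Here $r\approx\tau$, $u\approx U$, and $\underline{u}$ ranges over an interval of length $\approx\tau$. Rescale $u$ by $U$ and $\underline u$ by $\tau$. The Jacobian $r^2\,du\,d\underline{u}\approx\tau^2U\tau$ gives the prefactor $\tau^{-3/2}U^{-1/2}$. The derivative $U\partial_u$ is controlled by $S$ and $\tau(\partial_t+\partial_r)$, using
\[
S=\tfrac12\bigl(u\,\partial_u\text{-part}+\underline u\,\partial_{\underline u}\text{-part}\bigr),
\]
i.e. $(t-r)\partial_u$ is expressible through $S$ and $(t+r)(\partial_t+\partial_r)$. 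The derivative $\tau\partial_{\underline u}$ equals $\tau(\partial_t+\partial_r)$ up to constants. Each good derivative then carries a factor $\tau$, giving $\tau^{-1/2}U^{-1/2}$ on the good-derivative term. Angular derivatives are handled by $\Omega$ with $r\approx\tau$ as before. The region $U=1$ is treated similarly, and the same argument with $U\approx\tau$ covers $C^{\tau/2}_\tau$.

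\textbf{Main obstacle.} The main difficulty is bookkeeping the mixed derivative $\partial_{\tilde t}\partial_{\tilde r}$, or $\partial_u\partial_{\underline u}$, so that at most two $Z$'s accompany a good derivative while three $Z$'s are used otherwise. I would handle it with the one-dimensional estimate
\[
\|f\|_{L^\infty}^2\lesssim \|f\|_{L^2}\|f'\|_{L^2}+\|f\|^2_{L^2}
\]
in each direction rather than full $H^2$. The commutator terms $[\partial_t+\partial_r,Z]$ are lower order and are absorbed into $Z^{\le3}$.
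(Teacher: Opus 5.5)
There is a genuine gap in your Sobolev step: as set up, it costs one derivative too many, and your coordinates put the good derivative in every direction. With $H^2(\S^2)$ in $\omega$ and a mixed $W^{1,1}$ (or $H^1\otimes H^1$) embedding in $(\tilde t,\tilde r)$, the top-order term is $\Omega^2\partial_{\tilde t}\partial_{\tilde r}w$, which is four derivatives. Moreover, neither $\partial_{\tilde t}=\tau\partial_t$ nor $\partial_{\tilde r}=R\partial_r$ is an admissible field. From $(t-r)\partial_t=S-r(\partial_t+\partial_r)$, each is, on $C^R_\tau$, a bounded combination of $S$ and $R(\partial_t+\partial_r)$. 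So $\partial_{\tilde t}\partial_{\tilde r}\Omega^2 w$ produces $S^2\Omega^2 w$, $R(\partial_t+\partial_r)S\Omega^2 w$ and $R^2(\partial_t+\partial_r)^2\Omega^2 w$, and none of these is controlled by the right side of \eqref{inftyR}. The same happens in your null coordinates on $C^U_\tau$: both $U\partial_u$ and $\tau\partial_{\underline{u}}$ contain $\tau(\partial_t+\partial_r)$, so the mixed derivative yields $\tau^2(\partial_t+\partial_r)^2$.

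The one-dimensional inequality $\|f\|^2_{L^\infty}\lesssim\|f\|_{L^2}\|f'\|_{L^2}+\|f\|^2_{L^2}$ does not repair this. Iterated in two directions, it still leaves a positive power of $\|\partial_{\tilde t}\partial_{\tilde r}\Omega^{\le 2}w\|$. A right side that is linear in $\|Z^{\le 3}w\|$ and $\|r^{-1}(\partial_t+\partial_r)(rZ^{\le 2}w)\|$ cannot control any power of such a norm. Regrouping within your coordinates does not help either: taking $H^2$ jointly in $(\tilde t,\omega)$ and $H^1$ in $\tilde r$ brings the count to three, but then $\partial_{\tilde t}^2$ again produces two good derivatives.

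The missing idea, which is what \cite{MTT} and the paper use, is to choose coordinates in which one coordinate derivative is exactly $S$.
\begin{itemize}
\item On $C^R_\tau$, take $t=e^s$, $r=e^{s+\rho}$, so $\partial_s=S$ and $\partial_\rho=r\partial_r$.
\item On $C^U_\tau$, take $t=e^s$, $t-r=e^{s+\rho}$, so $\partial_s=S$ and $\partial_\rho=-(t-r)\partial_r$.
\end{itemize}
After localizing, apply the anisotropic embedding $\|f\|_{L^\infty}\lesssim\|\partial^{\le 2}_{s,\omega}f\|_{L^2}+\|\partial_\rho\partial^{\le 2}_{s,\omega}f\|_{L^2}$. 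This uses $H^2$ on the three-dimensional $(s,\omega)$ slices (since $2>3/2$) and only $H^1$ in $\rho$.

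The $(s,\omega)$-derivatives are then exactly $S^j\Omega^k$ with $j+k\le 2$. The single $\partial_\rho$ is converted via $\partial_r=\frac{t}{t-r}(\partial_t+\partial_r)-\frac{1}{t-r}S$ into one more $S$, which gives $Z^{\le 3}$. It also produces one good derivative, with weight $\frac{rt}{t-r}\approx R$ (resp. $t\approx\tau$), acting on $Z^{\le 2}w$. The $r^{-1}$ discrepancy with $r^{-1}(\partial_t+\partial_r)(r\,\cdot)$ is harmless there, as you noted.

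The Jacobians $tr^3$ and $t(t-r)r^2$ then give exactly the prefactors you computed, and those prefactor computations are correct. Your treatment of $R=1$ is also fine as it stands. There only $t$ is rescaled, $\nabla\in Z$, and $\tau\partial_t=\frac{\tau}{t}(S-x\cdot\nabla)$, so $H^1_{\tilde t}H^2_x$ costs only three admissible derivatives.
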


After localizing with cutoff functions and converting to $t=e^s$ and
$r=e^{s+\rho}$ in the $C^R_\tau$ regions and $t=e^s$, $t-r=e^{s+\rho}$
in the $C^U_\tau$ regions, the estimate follows from standard Sobolev
bounds in the $(s,\rho,\omega)$ coordinates.  Here, it is important to
note, e.g.,
\[\partial_s (w(e^s, e^{s+\rho}\omega)) = (Sw)(e^s,
  e^{s+\rho}\omega),\quad \partial_\rho(w(e^s, e^{s+\rho}\omega)) =
  (r\partial_r)w(e^s,e^{s+\rho}\omega)\]
on the $C^R_\tau$ regions.  A similar calculation is made for
$C^U_\tau$.  We have additionally used that
\[\partial_t-\partial_r = \frac{2}{t-r} S -
  \frac{t+r}{t-r}(\partial_t+\partial_r)\]
to convert the $\partial_r = \frac{1}{2}\Bigl[(\partial_t+\partial_r)
+ (\partial_t-\partial_r)\Bigr]$ that occurs into the ``good''
derivative $\partial_t+\partial_r$.  This conversion costs an
additional vector field on the first term in the right when compared to \cite{MTT}.
See, also, \cite{Looi-Tohaneanu},
\cite{M-Rhoads}, \cite{M-Stewart}.  The nonlinearity under
consideration is not sufficiently regular to permit a loss of three
vector fields.  So we prove alternatives to \eqref{inftyR},
\eqref{inftyU} that will be used instead.

The new decay bounds, which notably do not
lose three vector fields in the right, are:
\begin{theorem}
  For $\tau \ge 1$, $1\le R\le \tau/2$, and $1\le U\le \tau/4$, we
  have
  \begin{multline}
    \label{newCRT}
        \|w\|_{L^\infty_{t,x}(C^R_\tau)} \lesssim
    \frac{1}{\tau^{\frac{1}{2}}R^{\frac{3}{2}}} \|Z^{\le 2}
    w\|_{L^2_{t,x}(\tC^R_\tau)} \\+
    \frac{1}{\tau^{\frac{1}{2}}R^{\frac{1}{2}}}
    \|r^{-1}(\partial_t+\partial_r)(r Z^{\le 2}
    w)\|_{L^2_{t,x}(\tC^R_\tau)}
    +
    \frac{1}{\tau^{\frac{1}{2}}R^{\frac{1}{2}}}
    \|\ang (Z^{\le 2} w)\|_{L^2_{t,x}(\tC^R_\tau)},
  \end{multline}
  and
    \begin{equation}
    \label{newCUT}
        \|w\|_{L^\infty_{t,x}(C^U_\tau)} \lesssim
    \frac{1}{\tau^{\frac{3}{2}}U^{\frac{1}{2}}} \|Z^{\le 2}
    w\|_{L^2_{t,x}(\tC^U_\tau)} +
    \frac{1}{\tau^{\frac{1}{2}}U^{\frac{1}{2}}}
    \|r^{-1}(\partial_t+\partial_r)(rZ^{\le 2} w)\|_{L^2_{t,x}(\tC^U_\tau)}.
  \end{equation}

  Moreover, for $2\le q\le 4$, we have
  \begin{multline}
    \label{LqCRT}
    \|w\|_{L^q_{t,x}(C^R_\tau)}\lesssim
    \tau^{\frac{1}{q}-\frac{1}{2}}R^{\frac{3}{q}-\frac{3}{2}} \|Z^{\le
      1} w\|_{L^2_{t,x}(\tC^R_\tau)} \\+
    \tau^{\frac{1}{q}-\frac{1}{2}}R^{\frac{3}{q}-\frac{1}{2}}
    \|r^{-1}(\partial_t+\partial_r)(rw)\|_{L^2_{t,x}(\tC^R_\tau)}
    +  \tau^{\frac{1}{q}-\frac{1}{2}}R^{\frac{3}{q}-\frac{1}{2}}
    \|\ang w\|_{L^2_{t,x}(\tC^R_\tau)}
  \end{multline}
  and
\begin{equation}
    \label{LqCUT}
    \|w\|_{L^q_{t,x}(C^U_\tau)}\lesssim
    U^{\frac{1}{q}-\frac{1}{2}}\tau^{\frac{3}{q}-\frac{3}{2}} \|Z^{\le
      1} w\|_{L^2_{t,x}(\tC^U_\tau)} +
    U^{\frac{1}{q}-\frac{1}{2}}\tau^{\frac{3}{q}-\frac{1}{2}}
    \|r^{-1}(\partial_t+\partial_r)(rw)\|_{L^2_{t,x}(\tC^U_\tau)}.
  \end{equation}
\end{theorem}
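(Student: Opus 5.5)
The plan is to follow the change of variables used for Proposition \ref{prop_mtt}, while paying attention to which directional derivatives actually occur. The goal is to avoid spending a third vector field on converting $\partial_\rho$ into a good derivative.

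\textbf{Setup on $C^R_\tau$.}
\begin{itemize}
\item Localize with a smooth cutoff adapted to $\tC^R_\tau$.
\item Pass to the coordinates $t=e^{s}$, $r=e^{s+\rho}$, $\omega\in\S^2$, in which the region becomes a set of unit size in $(s,\rho)$ times the sphere.
\item The volume element satisfies $dt\,dx\approx \tau R^3\,ds\,d\rho\,d\omega$, which yields the prefactors $\tau^{-1/2}R^{-3/2}$ in $L^\infty$ and $\tau^{\frac1q-\frac12}R^{\frac3q-\frac32}$ in $L^q$.
\end{itemize}

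\textbf{The $L^\infty$ bound \eqref{newCRT}.} On the four-dimensional region I would not use the isotropic embedding $H^3\subset L^\infty$. Instead I would use a mixed embedding that costs only two derivatives in total plus one extra derivative in the $(\rho,\omega)$ directions. Concretely, I would use the anisotropic Sobolev bound on the product of the unit $s$-interval with the unit $\rho$-interval times $\S^2$:
\[
\|f\|_{L^\infty}\lesssim \sum_{j\le 1}\|\partial_s^{j}f\|_{L^\infty_sL^2_{\rho,\omega}}+\dots,
\]
or, more simply, apply $H^{1}_s\subset L^\infty_s$ followed by $H^{2}\subset L^\infty$ in the three variables $(\rho,\omega)$. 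This needs $\partial_s^{\le1}\partial_{\rho,\omega}^{\le 2}$. A cleaner option is a Sobolev inequality of the form $\|f\|_{L^\infty}\lesssim \|\partial^{\le 2}f\|_{L^2}+\|\partial^{\le 2}\partial_{\rho,\omega}f\|_{L^2}$, which holds because it is three derivatives in a cone of directions containing $\partial_s$ at most twice. The vector fields then translate as follows:
\begin{itemize}
\item $\partial_s$ becomes $S$ and $\partial_\omega$ becomes $\Omega$, both members of $Z$.
\item $\partial_\rho$ becomes $r\partial_r$.
\item Up to two of the derivatives are absorbed into $Z^{\le2}$.
\item The single extra derivative is either $\Omega$, written as $r\ang$, or $r\partial_r$, written as $r\cdot r^{-1}(\partial_t+\partial_r)(r\,\cdot)$ plus $r\cdot\frac{1}{2}(\partial_t-\partial_r)$ plus a zeroth-order term.
\end{itemize}
On $C^R_\tau$ with $R\le\tau/2$ we have $t-r\approx\tau$. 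The identity $\partial_t-\partial_r=\frac{2}{t-r}S-\frac{t+r}{t-r}(\partial_t+\partial_r)$ then gives $r(\partial_t-\partial_r)=O(R/\tau)S+O(R)(\partial_t+\partial_r)$. The $S$ piece would cost a third vector field, but it carries the factor $R/\tau\lesssim1$. To handle it, I would organize the embedding so that the extra derivative is applied before the two $Z$'s, that is, choose the ordering so that $S$ counts among the two admissible fields. The multiplication by $R$ accounts for the prefactor improving from $R^{-3/2}$ to $R^{-1/2}$ on the good-derivative and $\ang$ terms.

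\textbf{The $C^U_\tau$ case \eqref{newCUT}.} Here I would use $t=e^s$, $t-r=e^{s+\rho}$. The volume factor is $U\tau^3$, and $\partial_\rho$ corresponds to $(t-r)$ times a combination of the form $-(t-r)\partial_r$ modulo $S$. On $\tC^U_\tau$ the good derivative comes with weight $\tau$, and angular derivatives $\Omega\approx\tau\ang$ remain in $Z$. Consequently only $r^{-1}(\partial_t+\partial_r)(r\,\cdot)$ appears, which explains the absence of an $\ang$ term in \eqref{newCUT}.

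\textbf{The $L^q$ bounds \eqref{LqCRT}, \eqref{LqCUT}.} I would interpolate between $L^2$ and $L^4$. In four dimensions $H^1\subset L^4$, so one derivative suffices. The derivative is $\partial_s$, $\partial_\omega$, or $\partial_\rho$, and $\partial_\rho$ is handled as above. For $q=4$ the volume scaling gives exactly the stated powers, and Hölder's inequality (or complex interpolation) covers $2\le q\le4$.

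\textbf{Main obstacle.} The hardest step is making the mixed embedding rigorous so that the $S$ generated by converting $\partial_r$ is never a third vector field. If the ordering trick fails, I would fall back on the identity $\partial_s-\partial_\rho=t\partial_t$, which directly exhibits the non-good direction as lying in the span of $S$ and $(\partial_t+\partial_r)$ with bounded coefficients.
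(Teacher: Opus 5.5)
Your plan stalls at exactly the step you flag as the main obstacle, and neither remedy you offer repairs it. In the coordinates $t=e^s$, $r=e^{s+\rho}$ one has $\partial_s=S$ but
\[
\partial_\rho=r\partial_r=\frac{rt}{t-r}(\partial_t+\partial_r)-\frac{r}{t-r}\,S .
\]
So the ``extra'' derivative also costs a vector field. An $L^2$-based embedding into $L^\infty$ in four variables cannot avoid third-order monomials, and in these coordinates each of them costs three vector fields. For example, $\partial_\rho\partial_s^2w$ contains $\frac{r}{t-r}S^3w$.

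Since coordinate derivatives commute and $\frac{r}{t-r}$ is homogeneous of degree zero, reordering changes nothing. The ``ordering trick'' therefore cannot make this $S$ count among the two admissible fields. The factor $R/\tau$ does not help either. Trading it via $t^{-1}S=\partial_t+\frac rt\partial_r$ produces $\partial_tS^2w$, whose $\partial_t-\partial_r$ component is not controlled by the right side of \eqref{newCRT}. The identity $\partial_s-\partial_\rho=t\partial_t$ is true but does not change the direction of $\partial_\rho$, which is the real issue.

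The same defect appears in your $C^U_\tau$ coordinates $t=e^s$, $t-r=e^{s+\rho}$. There $\partial_\rho=-(t-r)\partial_r=S-t(\partial_t+\partial_r)$, which carries $S$ with coefficient one. Your candidate embeddings are also mis-specified. Both $H^1_s$ followed by $H^2_{\rho,\omega}$, and $\partial^{\le2}$ in all directions plus one $\partial_{\rho,\omega}$, allow two or three $\partial_\rho$'s. Those are iterated good derivatives, which are not available even in the right coordinates.

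\textbf{The missing idea.} Choose coordinates whose $\rho$-curves are the outgoing null rays: $t-r=e^s$, $r=e^{s+\rho}$. Then still $\partial_s=S$, but now $\partial_\rho=r(\partial_t+\partial_r)$ exactly. The Jacobian is $r^3(t-r)$, which is $\approx\tau R^3$ on $\tC^R_\tau$ and $\approx\tau^3U$ on $\tC^U_\tau$. Use the anisotropic embedding with $H^1$ in $\rho$ and $H^2$ in $(s,\omega)$:
\[
\|w\|_{L^\infty}\lesssim\|\partial^{\le2}_{s,\omega}w\|_{L^2}+\|\partial_\rho\partial^{\le2}_{s,\omega}w\|_{L^2}.
\]
This costs only $S,\Omega$ twice plus one good derivative. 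Writing $(\partial_t+\partial_r)w=r^{-1}(\partial_t+\partial_r)(rw)-r^{-1}w$ then gives \eqref{newCRT} and \eqref{newCUT} for $R,U>1$ and $\tau>8$, with no $\ang$ term needed.

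\textbf{Regions you did not treat.} Separately, you never handle the regions where log-polar coordinates degenerate: $C^{R=1}_\tau$ (where $r\to0$), $C^{U=1}_\tau$ (where $t-r$ is near zero or negative), and $\tau\le8$.
For $R=1$ the paper uses $t=e^s+|y|$, $x=y$ together with the translations. There $\partial_{y_j}$ corresponds to $\omega_j(\partial_t+\partial_r)+\ang_j$. This, not an extra $\Omega$, is the real source of the $\ang Z^{\le2}w$ term. The accompanying singular term $r^{-1}Z^{\le2}w$ must then be removed with \eqref{hardyR}.
For $U=1$ the paper uses $t=q+e^\rho$, $r=e^\rho$. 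For $\tau\le8$ it uses Sobolev in $x$ plus an integration in $t$.

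Your $L^q$ argument, by contrast, is essentially sound away from these regions. With a single derivative, the $S$-component of your $\partial_\rho$ only contributes a $Z^{\le1}w$ term.
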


\begin{proof}
  To prove \eqref{newCRT} and \eqref{newCUT}, we consider a few cases separately.
\vskip .125in
  \noindent{\bf Case 1  ($\tau > 8$, $1<R\le \tau/2$):}
  We introduce the coordinates
  \[t-r=e^s,\quad r=e^{s+\rho},\quad \phi(s,\rho,\omega)=(e^s+e^{s+\rho},e^{s+\rho}\omega)\]
  and notice that
  \[\partial_s(w(e^s+e^{s+\rho}, e^{s+\rho}\omega)) =
    (Sw)(e^s+e^{s+\rho}, e^{s+\rho}\omega),
    \quad
    \partial_\rho(w(e^s+e^{s+\rho}, e^{s+\rho}\omega)) =
    (r(\partial_t+\partial_r)w)(e^s+e^{s+\rho}, e^{s+\rho}\omega).\]
  Thus if we include cutoffs to localize to $C^R_\tau$ and apply
  Sobolev embeddings in $(s,\rho,\omega)$, we obtain
  \begin{align*}\|w\|_{L^\infty_{t,x}(C^R_\tau)}&\lesssim
                                       \|\partial_{s,\omega}^{\le 2}
                                       w\|_{L^2_{s,\rho,\omega}(\phi^{-1}(\tC^R_\tau))}
                                       + \|\partial_\rho
                                       \partial_{s,\omega}^{\le 2}
                                       w\|_{L^2_{s,\rho,\omega}(\phi^{-1}(\tC^R_\tau))}\\
    &\lesssim R^{-\frac{3}{2}}\tau^{-\frac{1}{2}} \|Z^{\le 2}
      w\|_{L^2_{t,x}(\tC^R_\tau)}
      + R^{-\frac{1}{2}}\tau^{-\frac{1}{2}}
      \|(\partial_t+\partial_r)Z^{\le 2} w\|_{L^2_{t,x}(\tC^R_\tau)}.
  \end{align*}
  Here, we have used that $t-r\approx \tau$ on $C^R_\tau$.
  After noticing
  \[(\partial_t+\partial_r)w=r^{-1}(\partial_t+\partial_r)(rw) -
    \frac{1}{r}w,\]
  we obtain \eqref{newCRT}.
   For these
  cases, we do not require the last term in the right side.

\vskip .125in
\noindent{\bf Case 2  ($\tau > 8$, $1<U\le \tau/4$):}
The second case is quite similar to the preceding one.  After
localizing to $C^U_\tau$ and using the same coordinates, Sobolev
estimates in $(s,\rho,\omega)$ gives \eqref{newCUT}.  We are now
noticing that $r\approx \tau$ on $C^U_\tau$ in the given ranges of the parameters.

\vskip .125in
In these first two cases, we have only relied upon the homogeneous
vector fields $S$ and $\Omega$.

\vskip .125in
\noindent{\bf Case 3  ($\tau > 8$, $R=1$):}
Considering
\[t=e^s + |y|,\quad x=y,\quad \phi(s,y)=(e^s+|y|,y)\]
we note
\begin{equation}\label{ds}\partial_s (w(e^s+|y|,y)) = ((t-r)\partial_t w)(e^s+|y|,y)
  = \Bigl(\frac{t-r}{t}Sw\Bigr)(e^s+|y|,y) -
  \Bigl(\frac{t-r}{t}r\partial_rw\Bigr)(e^s+|y|,y).
\end{equation}
Applying a $1$-dimensional Sobolev embedding in $s$ and using that
$\frac{t-r}{t}$ is bounded on $C^{R=1}_\tau$, we have
\[\|w\|_{L^\infty_{t,x}(C^{R=1}_\tau)} \lesssim \sum_{|\alpha|+j\le 1}
  \|\beta_{C^{R=1}_\tau}(\phi(s,y)) (S^j\partial_x^{|\alpha|}
  w)(\phi(s,y))\|_{L^\infty_y L^2_s},
\]
where $\beta_{C^{R=1}_\tau}$ is an appropriate cutoff that is
identically $1$ on an enlargement of $C^{R=1}_\tau$ but vanishes
outside $\tC^{R=1}_\tau$.

We want to subsequently apply an embedding in the $y$-variables.  This
gives
\begin{multline*}\|w\|_{L^\infty_{t,x}(C^{R=1}_\tau)} \lesssim
  \sum_{\substack{|\alpha|+j+|\mu|\le 2\\|\alpha|+j\le 1}}
  \|\partial^{|\mu|}_y \beta_{C^{R=1}_\tau}(\phi(s,y))
  (S^j\partial_x^{|\alpha|} w)(\phi(s,y))\|_{L^2_{s,y}}
\\+ \sum_{|\mu|=2} \sum_{j+|\alpha|=1}\|\partial_y^{|\mu|} \beta_{C^{R=1}_\tau}(\phi(s,y))
  (S^j\partial_x^{|\alpha|} w)(\phi(s,y))\|_{L^2_{s,y}}.
\end{multline*}
We note that
\begin{equation}\label{dy}\begin{split}\partial_{y_j}(w(e^s+|y|,y)) &=
  \Bigl(\frac{x_j}{r}(\partial_t+\partial_r)w\Bigr)(e^s+|y|,y)
  +\ang_j w (e^s+|y|,y)\\
&=\Bigl[\Bigl(\frac{x_j}{r} \frac{1}{t}S - \frac{x_j}{t}\partial_r+ \partial_{x_j}\Bigr)w\Bigr](e^s+|y|,y).
  \end{split}
\end{equation}
We need only observe that the coefficients are bounded and that
$\partial_{y_j}\partial_{y_k} (w(\phi(s,y))) = (Yw)(\phi(s,y))$ with
\begin{multline*}Y
  =-\omega_j\omega_k \frac{1}{t^2}S + \frac{1}{tr}
  (\delta_{jk}-\omega_j\omega_k)S
  -\frac{1}{t}(\delta_{jk}-\omega_j\omega_k)\omega_l \partial_{x_l}
  -\frac{1}{t}\omega_k (\delta_{jl}-\omega_j\omega_l)\partial_{x_l}
  -\frac{t-r}{t^2}\omega_j\omega_k\omega_l \partial_{x_l}
  \\+\omega_j\omega_k \frac{1}{t}(\partial_t+\partial_r)S - \frac{r}{t}
  \omega_j\omega_k\omega_l (\partial_t+\partial_r)\partial_{x_l}
  + \omega_j (\partial_t+\partial_r)\partial_{x_k}
  +\omega_k \frac{1}{t}\ang_j S
  - \frac{r}{t} \omega_k\omega_l \ang_j \partial_{x_l} + \ang_j \partial_k.
  \end{multline*}
Thus, upon changing back to $(t,x)$ coordinates and using that
$t-r\approx \tau$ on $\tC^{R=1}_\tau$, we have
\[\|w\|_{L^\infty_{t,x}(C^{R=1}_\tau)}
  \lesssim \tau^{-\frac{1}{2}} \|r^{-1} Z^{\le 2}
  w\|_{L^2_{t,x}(\tC^{R=1}_\tau)}
  + \tau^{-\frac{1}{2}} \|r^{-1}(\partial_t+\partial_r)(r Z^{\le 2}
  w)\|_{L^2_{t,x}(\tC^{R=1}_\tau)}
  + \tau^{-\frac{1}{2}}\|\ang Z^{\le 2} w\|_{L^2_{t,x}(\tC^{R=1}_\tau)}.\]
  Here we have also written
  \[(\partial_t+\partial_r)w = r^{-1}(\partial_t+\partial_r)(rw) -
    \frac{1}{r}w.\]
  To resolve the singular contribution of the first term on the right,
  we first note that there is nothing required on
  $\tC^{R=1}_\tau\backslash C^{R=1}_\tau$.  Then to
  $\beta_{C^{R=1}_\tau} Z^{\le 2} w$, which vanishes at $t=0$, we apply
  \eqref{hardyR}, with, say, $s=1/2$.  This gives
  \begin{align*}
    \tau^{-\frac{1}{2}}\|r^{-1} Z^{\le 2} w\|_{L^2_{t,x}(C^{R=1}_\tau)}
    &\lesssim \tau^{-\frac{1}{2}}\|r^{-1}(\partial_t+\partial_r)(r
      \beta_{C^{R=1}_\tau}Z^{\le 2}
      w)\|_{L^2_{t,x}}\\
    &\lesssim \tau^{-\frac{1}{2}} \|Z^{\le 2}
      w\|_{L^2_{t,x}(\tC^{R=1}_\tau)} +
      \tau^{-\frac{1}{2}}\|r^{-1}(\partial_t+\partial_r)(rZ^{\le 2} w)\|_{L^2_{t,x}(\tC^{R=1}_\tau)},
  \end{align*}
  which suffices and completes the proof of the third case.

We note that it is this case that necessitates the inclusion of the
angular derivative terms in the right side of \eqref{newCRT}.  We also
note that, for this case, we include the generators of spatial
translations in our admissible vector fields.  Notably, however, we do
not require $\partial_t$ as a vector field.

\vskip .125in
\noindent{\bf Case 4  ($\tau > 8$, $U=1$):}
Here we consider
\[t=q+e^\rho,\quad r=e^\rho.\]
If we now set $\phi(q,\rho,\omega) = (q+e^\rho,e^\rho\omega)$, 
Then
\[\partial_\rho (w(\phi(q,\rho\omega))) =
  (r(\partial_t+\partial_r)w)(\phi(q,\rho\omega)),\quad
  \partial_q(w(\phi(q,\rho,\omega))) =
  (\partial_tw)(\phi(q,\rho\omega)).\]
We moreover use that
\[\partial_t = \frac{1}{t}S - \frac{r}{t}\partial_r.\]
Applying Sobolev's estimate in $(q,\rho,\omega)$, after localizing to
$C^{U=1}_\tau$, gives
\begin{align*}\|w\|_{L^\infty_{t,x}(C^{U=1}_\tau)}
  &\lesssim \|\partial_{q,\omega}^{\le 2}
  w\|_{L^2_{q,\rho,\omega}(\phi^{-1}(C^{U=1}_\tau))}
  + \|\partial_\rho \partial_{q,\omega}^{\le 2}
    w\|_{L^2_{q,\rho,\omega}(\phi^{-1}(C^{U=1}_\tau))}\\
  &\lesssim \tau^{-\frac{3}{2}} \|Z^{\le 2}
    w\|_{L^2_{t,x}(\tC^{U=1}_\tau)}
    + \tau^{-\frac{1}{2}}\|r^{-1}(\partial_t+\partial_r)(rZ^{\le 2}w)\|_{L^2_{t,x}(\tC^{U=1}_\tau)}
  \end{align*}
as desired.

\vskip .125in
\noindent{\bf Case 5 ($\tau \le 8$):}
Letting $\beta$ be a smooth cutoff that is equal to $1$ on
$C_{\tau}$ and vanishes outside $\tilde{C}_{\tau}$,
we observe that
\[\frac{d}{dt}\|\beta v(t,\cd)\|^2_{L^2} = 2 \la \beta v(t,\cd),
  (\partial_t+\partial_r+\frac{1}{r})(\beta v)\ra.\]
We select $t_0$ so that
\[\|\beta v(t_0,\cd)\|^2_{L^2} \lesssim \|\beta v\|^2_{L^2_{t,x}(\tC_\tau)}.\]
Thus, if $t\in [\tau, 2\tau]$,
\begin{align*}\|\beta v(t,\cd)\|^2_{L^2} &\lesssim \|\beta
                                           v(t_0,\cd)\|^2_{L^2} +
                                           \|\beta
                                           v\|_{L^2_{t,x}(\tC_\tau)}\|r^{-1}(\partial_t+\partial_r)(r\beta
                                           v)\|_{L^2_{t,x}(\tC_\tau)}\\
  &\lesssim \|v\|^2_{L^2_{t,x}(\tC_\tau)} +
    \|r^{-1}(\partial_t+\partial_r)(r v)\|^2_{L^2_{t,x}(\tC_\tau)}.
\end{align*}
We combine the above with Sobolev's estimate to see
\begin{align*}
  \|w(t,\cd)\|_{L^\infty_x} &\lesssim \|\nabla^{\le 2}
                              w(t,\cd)\|_{L^2}\\
  &\lesssim \|\nabla^{\le 2} w\|_{L^2_{t,x}(\tC_\tau)} +
    \|r^{-1}(\partial_t+\partial_r)(r\nabla^{\le 2}w)\|_{L^2_{t,x}(\tC_\tau)},
\end{align*}
which completes the proof of \eqref{newCRT} and \eqref{newCUT}.

The argument for \eqref{LqCRT} and \eqref{LqCUT} is similar.  The same
changes of coordinates are used, but one instead uses $L^2-L^q$
embeddings in the new coordinates rather than the $L^2-L^\infty$ ones above.
\end{proof}

\vskip .25in

\section{Proof of Theorem \ref{main_thm}}
For such a small data result, the most delicate behavior occurs for
small $p$.  We assume that $1+\sqrt{2}<p<3$, and we set
\[s=\frac{3}{2}-\frac{2}{p-1},\quad \sigma=\frac{p-2}{2p}.\]
  Here we also notice that $\frac{2}{1-2\sigma}=p$ and $s< 1/2$
  for the given range of $p$.  Importantly, we
  observe that
  \[0\le \sigma < s \iff p > 1+\sqrt{2}.\]
  
  If we are in the case that
  $p\ge 3$, we may simply select $\tilde{p}$ satisfying the above.  We
  run the argument below using $\tilde{p}$ and may rely upon
  \eqref{newCRT} and \eqref{newCUT} to control the $p-\tilde{p}$ extra copies of
  the $w$.

We abbreviate
\begin{multline*}
  \|w\|_{X} =
\|\la r\ra^{\frac{1}{2}-s+\sigma} \la t-r\ra^{-\sigma}
    r^{-1}(\partial_t+\partial_r)(rw)\|_{\ell^{p}_U
      \ell^2_{\tau \ge 4U} L^2_{t,x}(C^U_\tau)} + \|\la
    r\ra^{-\frac{1}{2}-s+\sigma} \la t-r\ra^{-\sigma}
    w\|_{\ell^{p}_U \ell^2_{\tau\ge 4U}
      L^2_{t,x}(C^U_\tau)}
\\      + \|\la r\ra^{\frac{1}{2}-s}
r^{-1}(\partial_t+\partial_r)(rw)\|_{L^2_{t,x}}
+\|\la r\ra^{\frac{1}{2}-s} \ang w\|_{L^2_{t,x}}
    + \|\la r\ra^{-\frac{1}{2}-s} w\|_{L^2_{t,x}}.
\end{multline*}

We shall solve \eqref{mainEquation} via iteration.  Let $w_{0}\equiv
0$, and take $w_j$ to solve
\begin{equation}
  \label{iteration}
  \begin{cases}
    \Bigl(\Box + a \la r\ra^{-1-\delta}\partial_t\Bigr)w_j =
    F_p(w_{j-1}),\\
    w_j(0,\cd)=\varepsilon w_{(0)},\quad \partial_t
    w_j(0,\cd)=\varepsilon w_{(1)}.
  \end{cases}
\end{equation}
Here $w_{(0)}, w_{(1)}\in C^\infty_c(\R^3)$, and we take without loss
of generality $\supp\, w_{(0)}, w_{(1)} \subseteq \{|x|\le
\frac{1}{2}\}$.   

In Section \ref{section_bdd}, we will show that $(w_j)$ is uniformly
bounded in $\|Z^{\le 2} \cdot \|_{X}$.  In the subsequent section,
Section \ref{section_Cauchy}, we shall show that the sequence is
Cauchy, and thus converges, in $X$.  By standard arguments, the limit
of this sequence must be the desired solution.

\subsection{Multilinear estimates}
The multilinear estimates that are required are similar to
\cite[(5.6) and (5.7)]{MW}, though we will be working in a different
$\|\,\cd\,\|_{N_0}$ norm that corresponds to the right side of
\eqref{rpZ2}.

There are four cases to consider, and we
will seek to prove
\begin{align}
  \label{UZ2}
\|\la r\ra^{1-s} \la t-r\ra^{\frac{1}{2}}
  |g|^{p-1} f\|_{\ell^1_U \ell^2_{\tau\ge 4U} L^2_tL^2_{x}(C^U_\tau)}
  &\lesssim \|Z^{\le 2} g\|^{p-1}_X \|f\|_X,\\
  \label{RZ2} \|\la r\ra^{\frac{3}{2}-s}|g|^{p-1} f\|_{\ell^2_\tau \ell^2_{R\le \tau/2}
    L^2_tL^2_{x}(C^R_\tau)}&\lesssim \|Z^{\le 2} g\|^{p-1}_X \|f\|_X,\\
  \label{UZ1}\|\la r\ra^{1-s} \la t-r\ra^{\frac{1}{2}}
  |g|^{p-2} f^2\|_{\ell^1_U \ell^2_{\tau\ge 4U}
  L^2_tL^2_{x}(C^U_\tau)}&\lesssim \|Z^{\le 2}g\|^{p-2}_X \|Z^{\le 1}f\|^2_X,\\
  \label{RZ1}\|\la r\ra^{\frac{3}{2}-s} |g|^{p-2}f^2\|_{\ell^2_\tau \ell^2_{R\le \tau/2}
    L^2_tL^2_{x}(C^R_\tau)}&\lesssim  \|Z^{\le 2}g\|^{p-2}_X \|Z^{\le 1}f\|^2_X.
\end{align}

\vskip .125in
\subsubsection{Proof of \eqref{UZ2}}
We begin with examining the left side of \eqref{UZ2}.
Using H\"older's inequality, for fixed $\tau, 1\le U \le \tau/4$, we get the bound
\[
\Bigl\|\tau^{1-s} U^{\frac{1}{2}}
  |g|^{p-1} f\Bigr\|_{L^2_{t,x}(C^U_\tau)}\\\lesssim
   \Bigl\|\tau^{\frac{3-2\sigma}{2(p-1)}} U^{\frac{1+2\sigma}{2(p-1)}}
     g\Bigr\|^{p-1}_{L^\infty_{t,x}(C^U_\tau)}
     \Bigl\|\la r\ra^{-\frac{1}{2}-s+\sigma}\la t-r\ra^{-\sigma} 
     f\Bigr\|_{L^2_{t,x}(C^U_\tau)}.
 \]

  To the first factor in the right, we apply \eqref{newCUT}, which
  bounds the right side by
  \begin{multline*}
   \Bigl(\Bigl\|\la r\ra^{-\frac{1}{2}-s+\sigma} \la t-r\ra^{-\sigma}
  Z^{\le 2} g\Bigr\|^{p-1}_{L^2_{t,x}(\tC^U_{t,x})}
   +    \Bigl\|\la r\ra^{\frac{1}{2}-s+\sigma} \la t-r\ra^{-\sigma}
  r^{-1}(\partial_t+\partial_r)(r Z^{\le 2}g)\Bigr\|^{p-1}_{L^2_{t,x}(\tC^U_{t,x})}\Bigr)\\\times
     \Bigl\|\la r\ra^{-\frac{1}{2}-s+\sigma}\la t-r\ra^{-\sigma} 
     f\Bigr\|_{L^2_{t,x}(C^U_\tau)}.
  \end{multline*}
Thus by H\"older's inequality and noting that $2p>2$, we see that the
left side of
\eqref{UZ2} is bounded by
\begin{multline*}
\Bigl(  \Bigl\|\la r\ra^{-\frac{1}{2}-s+\sigma} \la t-r\ra^{-\sigma} Z^{\le 2}
g\Bigr\|^{p-1}_{\ell^p_U \ell^2_{\tau\ge 4U} L^2_{t,x}(\tilde{C}^U_\tau)}
+   \Bigl\|\la r\ra^{\frac{1}{2}-s+\sigma} \la t-r\ra^{-\sigma} r^{-1}
(\partial_t+\partial_r) (r Z^{\le 2}
g)\Bigr\|^{p-1}_{\ell^p_U\ell^2_{\tau\ge 4U} L^2_{t,x}(\tC^U_\tau)}\Bigr)
\\\times
\Bigl\|\la r\ra^{-\frac{1}{2}-s+\sigma}\la t-r\ra^{-\sigma} 
     f\Bigr\|_{\ell^p_U \ell^2_{\tau\ge 4U}L^2_{t,x}(C^U_\tau)}
\end{multline*}
which is $\mathcal{O}( \|Z^{\le 2}g\|^{p-1}_X\|f\|_X)$
as desired.

\vskip .125in
\subsubsection{Proof of \eqref{RZ2}}
This proof follows the same outline as \eqref{UZ2}, but absorbing the
summations is less delicate.  Indeed, by
H\"older's inequality, we get
\[
  \|\la r\ra^{\frac{3}{2}-s} |g|^{p-1}f\|_{L^2_{t,x}(C^R_\tau)}\lesssim
  \|\la r\ra^{\frac{2}{p-1}} g\|^{p-1}_{L^\infty_{t,x}(C^R_\tau)}
  \|\la r\ra^{-\frac{1}{2}-s} f\|_{L^2_{t,x}(C^R_\tau)}.
\]
So long as $1\le R\le \tau/2$, we may apply \eqref{newCRT} to the first factor, which
provides the bound
\begin{multline*}
  \|\la r\ra^{\frac{3}{2}-s} |g|^{p-1}f\|_{L^2_{t,x}(C^R_\tau)}
  \lesssim
\|\la r\ra^{-\frac{1}{2}-s} f\|_{L^2_{t,x}(C^R_\tau)}\\\times
  \Bigl(\|\la r\ra^{-\frac{1}{2}-s} Z^{\le
    2}g\|^{p-1}_{L^{2}_{t,x}(\tC^R_\tau)}
  +\|\la
  r\ra^{\frac{1}{2}-s}r^{-1}(\partial_t+\partial_r)(rZ^{\le 2}
  g)\|^{p-1}_{L^2_{t,x}(\tC^R_\tau)}
  + \|\la r\ra^{\frac{1}{2}-s} \ang Z^{\le 2} g\|^{p-1}_{L^2_{t,x}(\tC^R_\tau)}
  \Bigr).
\end{multline*}
Summing over $R, \tau$ and using that $\|\cdot \|_{\ell^{2p}} \lesssim
\|\cdot \|_{\ell^2}$ as $p>1$ gives \eqref{RZ2}.

\vskip .125in
\subsubsection{Proof of \eqref{UZ1}}
For fixed $1\le U\le \tau/4$, we use H\"older's inequality to see that
\[
  \Bigl\|\tau^{1-s} U^{\frac{1}{2}} |g|^{p-2}f^2\Bigr\|_{L^2_{t,x}(C^U_\tau)}
  \lesssim
  \tau^{1-s}U^{\frac{1}{2}}\|g\|^{p-2}_{L^\infty_{t,x}(C^U_\tau)}
  \|f\|^2_{L^4_{t,x}(C^U_\tau)}. 
\]
To the first factor on the right side, we apply \eqref{newCUT}, and on
the second factor we use \eqref{LqCUT}.  This gives that the right
side is controlled by
\begin{multline*}
  \Bigl(\|\la r\ra^{-\frac{1}{2}-s+\sigma} \la t-r\ra^{-\sigma} Z^{\le
    2} g\|_{L^2_{t,x}(\tC^U_\tau)}
  + \|\la r\ra^{\frac{1}{2}-s+\sigma} \la t-r\ra^{-\sigma}
  r^{-1}(\partial_t+\partial_r)(rZ^{\le 2}g)\|_{L^2_{t,x}(\tC^U_\tau)}\Bigr)^{p-2}
  \\\times
  \Bigl(\|\la r\ra^{-\frac{1}{2}-s+\sigma} \la t-r\ra^{-\sigma} Z^{\le
    1} f\|_{L^2_{t,x}(\tC^U_\tau)}
  + \|\la r\ra^{\frac{1}{2}-s+\sigma} \la t-r\ra^{-\sigma}
  r^{-1}(\partial_t+\partial_r)(rZ^{\le 1} f)\|_{L^2_{t,x}(\tC^U_\tau)}\Bigr)^2.
\end{multline*}
And, thus, it follows that
\begin{multline*}
 \Bigl\|\tau^{1-s} U^{\frac{1}{2}}
 |g|^{p-2}f^2\Bigr\|_{\ell^1_U\ell^2_{\tau\ge 4U}L^2_{t,x}(C^U_\tau)}
 \\\lesssim\Bigl(\|\la r\ra^{-\frac{1}{2}-s+\sigma} \la t-r\ra^{-\sigma} Z^{\le
    2} g\|^{p-2}_{\ell^p_U \ell^2_{\tau\ge 4U}L^2_{t,x}(\tC^U_\tau)}
  + \|\la r\ra^{\frac{1}{2}-s+\sigma} \la t-r\ra^{-\sigma}
  r^{-1}(\partial_t+\partial_r)(rZ^{\le
    2}g)\|^{p-2}_{\ell^{p}_U\ell^2_{\tau\ge 4U}L^2_{t,x}(\tC^U_\tau)}\Bigr)
  \\\times
  \Bigl(\|\la r\ra^{-\frac{1}{2}-s+\sigma} \la t-r\ra^{-\sigma} Z^{\le
    1} f\|^2_{\ell^p_U\ell^2_{\tau\ge 4U}L^2_{t,x}(\tC^U_\tau)}
  + \|\la r\ra^{\frac{1}{2}-s+\sigma} \la t-r\ra^{-\sigma}
  r^{-1}(\partial_t+\partial_r)(rZ^{\le 1}
  f)\|^2_{\ell^p_U\ell^2_{\tau\ge 4U}L^2_{t,x}(\tC^U_\tau)}\Bigr),
\end{multline*}
which is $\O(\|Z^{\le 2} g\|_X^{p-2}\|Z^{\le 1} f\|_X^2)$ as needed.

\vskip .125in
\subsubsection{Proof of \eqref{RZ1}}
This follows the same strategy as the more subtle case \eqref{UZ1}.
By H\"older's inequality, \eqref{newCRT}, and \eqref{LqCRT}, we get
\begin{multline*}
  \|\la r\ra^{\frac{3}{2}-s} |g|^{p-2}f^2\|_{L^2_tL^2_x(C^R_\tau)}
  \\\lesssim
  \Bigl(\|\la r\ra^{-\frac{1}{2}-s} Z^{\le
    2}g\|_{L^2_{t,x}(\tC^R_\tau)} + \|\la r\ra^{\frac{1}{2}-s}
  r^{-1}(\partial_t+\partial_r)(r Z^{\le
    2}g)\|_{L^2_{t,x}(\tC^R_\tau)}
  +\|\la r\ra^{\frac{1}{2}-s} \ang Z^{\le
    2}g\|_{L^2_{t,x}(\tC^R_\tau)}
  \Bigr)^{p-2}
  \\\times
  \Bigl(\|\la r\ra^{-\frac{1}{2}-s} Z^{\le
    1}f\|_{L^2_{t,x}(\tC^R_\tau)}
  +\|\la r\ra^{\frac{1}{2}-s} r^{-1}(\partial_t+\partial_r)(r Z^{\le
    1}f)\|_{L^2_{t,x}(\tC^R_\tau)}
  +\|\la r\ra^{\frac{1}{2}-s} \ang Z^{\le
    1}f\|_{L^2_{t,x}(\tC^R_\tau)}\Bigr)^2
\end{multline*}
provided that $1\le R\le \tau/2$.
Summing over $R, \tau$ then yields \eqref{RZ1}.

\subsection{Boundedess}\label{section_bdd}
In this section, we use induction to show that there exists a constant
$C_0$ (independent of $j$) so that
\begin{equation}
  \label{bound_goal}
  \|Z^{\le 2} w_j\|_X \le 2C_0\varepsilon.
\end{equation}
By \eqref{rpZ2} and the fact that $w_{(0)},
w_{(1)}\in C^\infty_c(\R^3)$, there exists $C_0$ so that
\[\|Z^{\le 2}w_1\|_X \le C_0\varepsilon,\]
which establishes the base case.

We now assume that
\eqref{bound_goal} holds for $w_j$.
A subsequent application of \eqref{rpZ2} gives
\begin{multline}\label{bound}\|Z^{\le 2} w_{j+1}\|_X \le C_0 \varepsilon
  +C \|\la r\ra^{\frac{3}{2}-s} Z^{\le 2}F_p(w_j)\|_{\ell^2_\tau \ell^2_{R\le \tau/2}
    L^2_{t,x}(C^R_\tau)} \\+ C\|\la r\ra^{1-s} \la t-r\ra^{\frac{1}{2}}
    Z^{\le 2}F_p(w_j)\|_{\ell^1_U \ell^2_{\tau\ge 4U} L^2_{t,x}(C^U_\tau)}.
  \end{multline}
  By \eqref{Fp}, we have
  \begin{multline*}
    \|\la r\ra^{\frac{3}{2}-s} Z^{\le 2}F_p(w_j)\|_{\ell^2_\tau \ell^2_{R\le \tau/2}
      L^2_{t,x}(C^R_\tau)}
    \lesssim \|\la r\ra^{\frac{3}{2}-s} |w_j|^{p-1} Z^{\le 2}w_j\|_{\ell^2_\tau \ell^2_{R\le \tau/2}
      L^2_{t,x}(C^R_\tau)}
    \\+\|\la r\ra^{\frac{3}{2}-s} |w_j|^{p-2} (Z^{\le 1} w_j)^2\|_{\ell^2_\tau \ell^2_{R\le \tau/2}
    L^2_{t,x}(C^R_\tau)},
  \end{multline*}
which is $\O(\|Z^{\le 2} w_j\|_X^p)$ by \eqref{RZ2} and \eqref{RZ1}.
In a similar fashion
\begin{multline*}
  \|\la r\ra^{1-s} \la t-r\ra^{\frac{1}{2}}
  Z^{\le 2}F_p(w_j)\|_{\ell^1_U \ell^2_{\tau\ge 4U}
      L^2_{t,x}(C^U_\tau)}
  \lesssim
    \|\la r\ra^{1-s} \la t-r\ra^{\frac{1}{2}}
    |w_j|^{p-1} Z^{\le 2}w_j\|_{\ell^1_U \ell^2_{\tau\ge 4U}
      L^2_{t,x}(C^U_\tau)}
    \\+\|\la r\ra^{1-s} \la t-|x|\ra^{\frac{1}{2}}
    |w_j|^{p-1}(Z^{\le 1}w_j)^2\|_{\ell^1_U \ell^2_{\tau\ge 4U} L^2_{t,x}(C^U_\tau)},
\end{multline*}
for which the $\O(\|Z^{\le 2} w_j\|^p_X)$ bound follows from
\eqref{UZ2} and \eqref{UZ1}.  Using these estimates in \eqref{bound}
gives
\[\|Z^{\le 2} w_{j+1}\|_X \le C_0\varepsilon + C\|Z^{\le 2}w_j\|^p_X.\]
Applying the inductive hypothesis and choosing $\varepsilon$
sufficiently small then establishes \eqref{bound_goal}.

\subsection{Convergence}\label{section_Cauchy}
We next consider $\|w_{j+1}-w_j\|_X$ and shall show that
\begin{equation}
  \label{Cauchy}
  \|w_{j+1}-w_j\|_X \le \frac{1}{2}\|w_j-w_{j-1}\|_X,\quad \text{ for
    all } j\ge 1.
\end{equation}
This suffices to prove that $(w_j)$ is Cauchy in $X$, and thus
convergent, which will complete the proof of Theorem \ref{main_thm}.

Due to \eqref{Fp} and the Mean Value Theorem,
\[|F_p(w_{j})-F_p(w_{j-1})|\lesssim \Bigl(|w_j|^{p-1} +
  |w_{j-1}|^{p-1}\Bigr) |w_j-w_{j-1}|.\]
Thus, by \eqref{RZ2} and \eqref{UZ2},
\[
  \|\la r\ra^{\frac{3}{2}-s} (F_p(w_j)-F_p(w_{j-1}))\|_{\ell^2_\tau \ell^2_{R\le \tau/2}
    L^2_{t,x}(C^R_\tau)}
  \lesssim \Bigl(\|Z^{\le 2}w_{j-1}\|_X^{p-1}+\|Z^{\le 2}w_j\|^{p-1}_X\Bigr)\|w_j-w_{j-1}\|_X
\]
and
\[
  \|\la r\ra^{1-s} \la t-r\ra^{\frac{1}{2}}
(F_p(w_j)-F_p(w_{j-1})\|_{\ell^1_U \ell^1_{\tau\ge 4U}
      L^2_{t,x}(C^U_\tau)}  \lesssim \Bigl(\|Z^{\le 2}w_{j-1}\|_X^{p-1}+\|Z^{\le 2}w_j\|^{p-1}_X\Bigr)\|w_j-w_{j-1}\|_X.
\]
An application of \eqref{rp} shows
\[\|w_{j+1}-w_j\|_X \lesssim  \Bigl(\|Z^{\le 2}w_{j-1}\|_X^{p-1}+\|Z^{\le 2}w_j\|^{p-1}_X\Bigr)\|w_j-w_{j-1}\|_X.\] 
Using \eqref{bound_goal} for $\varepsilon$ sufficiently small then
yields \eqref{Cauchy} as desired.



\end{document}